\documentclass[11pt]{amsart}
\usepackage{geometry}                % See geometry.pdf to learn the layout options. There are lots.
\geometry{letterpaper}                   % ... or a4paper or a5paper or ... 
\usepackage{graphicx}
\usepackage{amssymb, amsmath,latexsym}
\usepackage{epstopdf,xypic}

\newtheorem{theorem}{Theorem}
\newtheorem{definition}{Definition}
\newtheorem{proposition}{Proposition}
\newtheorem{example}{Example}
\newtheorem{conjecture}{Conjecture}
\newtheorem{corollary}{Corollary}
\newcommand{\lsem}{[\![}
\newcommand{\rsem}{]\!]}

\DeclareGraphicsRule{.tif}{png}{.png}{`convert #1 `dirname #1`/`basename #1 .tif`.png}

\title{Boolean formulae, hypergraphs and combinatorial topology}
\author{James Conant}
\author{Oliver Thistlethwaite}
\date{}                                           % Activate to display a given date or no date

\begin{document}
\begin{abstract}
With a view toward studying the homotopy type of spaces of Boolean formulae, we introduce a simplicial complex, called the theta complex, associated to any hypergraph, which is the Alexander dual of the more well-known independence complex.
 In particular, the set of satisfiable formulae in $k$-conjunctive normal form with $\leq n$ variables has the homotopy type of $\Theta(\operatorname{Cube}(n,n-k))$, where $\operatorname{Cube}(n,n-k)$ is a hypergraph associated to the $(n-k)$-skeleton of an $n$-cube. We make partial progress in calculating the homotopy type of theta for these cubical hypergraphs, and we also give calculations and examples for other hypergraphs as well. Indeed studying the theta complex of hypergraphs is an interesting problem in its own right.
\end{abstract}
\maketitle

\section{Introduction}
In this paper we introduce and study a new concept in combinatorial topology, which we call the theta complex of a hypergraph. A hypergraph, $\mathcal H$, is a set of vertices and a set of subsets of the vertices, called hyperedges. The theta complex $\Theta(\mathcal H)$ is a simplicial complex with simplices spanned by vertices that are in the complement of at least one hyperedge. Despite the simplicity of this definition, the homotopy type of $\mathcal H$ is usually not obvious even for simple  hypergraphs.

Our main interest in defining and pursuing this construction is the hope that topology can be brought to bear on the famous P/NP question of computer science. Very briefly, a \emph{decision problem} is a function from a set of input strings to the set $\{\text{Yes},\text{No}\}$. A decision problem is said to be a P  problem if there is an algorithm (implemented on a Turing machine) which terminates in the correct answer of ``yes" or ``no" after a number of steps bounded by a polynomial in the size of the input string. 
On the other hand, an NP problem is a decision problem that can be ``checked" in polynomial time, and an NP complete problem is an NP problem to which every other NP problem can be reduced in polynomial time. The class of P problems is a subset of the class of NP problems, but it is widely believed that they are not equal. I.e. there is no polynomial time algorithm for solving an NP-complete problem.

%More formally, let $F\colon \{\text {input strings}\} \to \{\text{Yes, No}\}$ be a decision problem.
%The problem is said to be of class NP if there is an algorithm, $A$, which takes as input both the input string, $s$, for the original problem and  a proposed ``certificate" string, $C$. This algorithm should have the property that it terminates in polynomial time as a function of the size of $s$ and $C$, and that $F(s)=\text{Yes}$ if and only if there exists a certificate $C$ such that $A(s,C)=\text{Yes}$. Clearly the class of P problems is a subset of the class of NP problems, but it is widely believed that they are not equal.

An important class of decision problems is the class of $k$-SAT problems, which ask whether a Boolean formula of a given type is satisfiable (i.e. is not a contradiction.) The $k$-SAT problem restricts to formulae which are conjunctions of disjunctions of $k$ literals. These are NP problems because 
an assignment of truth values to the variables can be verified to be a satisfaction in polynomial time. It turns out that $2$-SAT is a P problem, but $k$-SAT for $k\geq 3$ is an NP complete problem. (This is Cook's Theorem.)
Thus one attempt to understand the P/NP question is to understand the difference between $2$-SAT and $3$-SAT. (See \cite{F1,F3}.)

One can assign a simplicial complex to any set of Boolean formulae by letting there be a simplex for every chain of implications 
$$\phi_0\Rightarrow \phi_1\Rightarrow\cdots\Rightarrow \phi_k.$$ 
If the set of formulae contains a contradiction or a tautology then the simplicial complex is a cone, and hence contractible. In the case of $k$-SAT, there are plenty of contradictions but no tautologies, so 
the simplicial complex of satisfiable formula has a chance to be topologically interesting.
One may hope that information about the topology or metric structure of such spaces can be used to distinguish P and NP.  Unfortunately, taking this simplicial realization for $k$-SAT seems to yield a contractible space when one uses an infinite number of variables, although the large-scale metric structure of this space deserves further study. (See \cite{F2}, which proposes that the study of large scale geometry of spaces associated to decision problems via ultrafilter limits could be used to distinguish P from NP.) In this paper, the approach of restricting to a finite number of variables is taken. Indeed, let $|k\text{-SAT-}n|$ be the simplicial complex of satisfiable formulae in $n$-variables in $k$-conjunctive normal form. Then the relevance  of the theta complex becomes apparent (Theorem \ref{cubenerve}): $$|k\text{-SAT-}n|\simeq \Theta(\operatorname{Cube}(n,n-k)),$$ where $\operatorname{Cube}(n,\ell)$ is the hypergraph whose vertices are the vertices of an $n$-cube,  and whose hyperedges come from the $\ell$-dimensional faces of the $n$-cube. 

So the problem now becomes to analyze the homotopy type of $\Theta(\operatorname{Cube}(n,\ell))$. This appears to be a difficult problem, the partial analysis of which forms the core of this paper. 
 Looking at the low dimensional data, one can conjecture a formula for $\Theta(\operatorname{Cube}(n,n-2))$, the case of $2$-SAT. Namely Conjecture \ref{factorialconjecture}, due to Oliver Thistlethwaite \cite{T}, states
$$\Theta(\operatorname{Cube}(n,n-2))\simeq \vee_{(2n-3)!!}S^{2n-2}.$$
It is surprising that the proof of this has been so elusive. In section~\ref{groupsection} we at least verify that this conjecture gives the correct Euler characteristic modulo $p$ for all $n\geq p$. 
On the other hand, the pattern for $k$-SAT for $k\geq 3$ remains hidden, but we can at least say they are not in general wedges of same-dimensional spheres. Indeed, this could be the topological difference between $2$-SAT and $k$-SAT for $k\geq 3$. (Conjecture \ref{spread})

That said, this paper is a preliminary investigation and does not address whether these topological phenomena are merely accidents or are related to the computational complexity of the corresponding decision questions. However, we believe that these topological phenomena are interesting in their own right independently of whether or not they do turn out to play a role in the P/NP question.

The main tool used in the paper is the technique of discrete vector fields \cite{Form1,Form2}, which are an efficient tool for calculating the homotopy types of finite simplicial complexes. In section \ref{vecsection} we give a brief overview of the technique. In section \ref{calcsection} we use this technique to calculate examples of $\Theta(\mathcal H),$ including $\Theta(\operatorname{Cube}(3,1))\simeq S^4\vee S^4\vee S^4$ (Example \ref{cubeexample}), and we also present the results of computer calculations for the case of cubes (Theorems \ref{cube1} and \ref{cube2}). 

Finally, in section \ref{groupsection}, we consider $p$-group actions on hypergraphs. A nice feature of the theta complex is that it behaves well with respect to such actions. Namely, Theorem \ref{grouptheorem} states that if $G$ is a finite $p$-group $$\chi(\Theta(\mathcal H))\cong \chi(\Theta(\mathcal H/G)) \mod p.$$ After giving a couple of examples we prove Theorem \ref{groupcube} which states that the Euler characteristic of $\Theta(\operatorname{Cube}(n,n-2))$ matches Conjecture \ref{factorialconjecture} modulo $p$, for all primes $p\leq n$. In fact, using discrete vector fields, we show the much stronger statement that  
$\Theta(\operatorname{Cube}(n,n-2)/\mathbb Z_p)$ is contractible whenever  $n\geq p$.
 
We have already intimated that the study of $\Theta(\mathcal H)$ is interesting in its own right, and in particular the case when $\mathcal H$ is a graph is an interesting subcase. Indeed the $1$-skeleta of $n$-dimensional cubes yields the puzzling sequence of Euler characteristics
$$0,4,8,12,144,7716,\ldots.$$
The class of graphs is studied in \cite{students} by students in an REU project.  In the last section we observe what an existing connectivity estimate \cite{e1} gives for the case of cubes. 

\hspace{1em}

\noindent{\bf Acknowledgements:} This research was partially supported by NSF Grant DMS 0604351. 
Thanks to Katie Bolus, Joshua Edmonds, Sara Evans, Tony Zamberlan \cite{students}, Nikolai Brodskiy, Mike Freedman, Jakob Jonsson, Alexander Engstr\"om, and the anonymous referee for helpful discussions.

\section{Basic Definitions}
\begin{definition}
A hypergraph, $\mathcal H$, is a pair $(V,H)$ where $V$ is a nonempty set, whose elements are called vertices and where $H$ is a collection of subsets of $V$. The elements of $H$ are called hyperedges.
\end{definition}

Note that a graph is a type of hypergraph where each hyperedge contains exactly two vertices.
There are a couple of basic operations one can do to hypergraphs to form new hypergraphs.

\begin{definition} Let $\mathcal H=(V,H)$ be a hypergraph.
\begin{enumerate}
\item The dual hypergraph $\mathcal H^*$ has vertex set equal to $H$, and has hyperedges corresponding to elements of $V.$ Namely a dual hyperedge associated to a vertex $v$ is defined to consist of all hyperedges containing $v$. 
\item The simplicial complex $\Theta(\mathcal H)$ is defined so that simplices are spanned by all finite subsets of complements of hyperedges of $\mathcal H$. 
\end{enumerate}
\end{definition}

\noindent{\bf Remark:} In this paper we will not distinguish between a combinatorial simplicial complex and 
its geometric realization.

This definition is related to one which has already been extensively studied in combinatorial topology. (See \cite{e1}).

\begin{definition}
Let $\mathcal H=(V,H)$ be a hypergraph. The \emph{independence complex}, $I(\mathcal H)$ is defined
 to have simplices which consist of collections of vertices from $V$, such that no set of vertices spans a hyperedge.\end{definition}

We also recall the definition of the Alexander Dual of a complex.  (See \cite{jonsson}.)

\begin{definition}
Let $X$ be a simplicial complex with vertex set $V$. The Alexander Dual $AD(X)$ has simplices $\sigma\subset V$ whenever $V\setminus\sigma$ is not a simplex of $X$.
\end{definition}

\begin{proposition}
We have that $\theta(\mathcal H)\cong AD(I(\mathcal H))$.
\end{proposition}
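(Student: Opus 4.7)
The plan is a straightforward definition chase: show that $\sigma\subseteq V$ is a simplex of $\Theta(\mathcal H)$ if and only if it is a simplex of $AD(I(\mathcal H))$, working on the common vertex set $V$.

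First I would record the membership criterion for $\Theta(\mathcal H)$ directly from its definition: $\sigma\in\Theta(\mathcal H)$ iff there exists a hyperedge $e\in H$ with $\sigma\subseteq V\setminus e$, equivalently $\sigma\cap e=\emptyset$. Next I would unfold $AD(I(\mathcal H))$: by definition of the Alexander dual, $\sigma\in AD(I(\mathcal H))$ iff $V\setminus\sigma$ is \emph{not} a simplex of $I(\mathcal H)$; by definition of the independence complex, this fails precisely when $V\setminus\sigma$ contains some hyperedge $e\in H$, i.e.\ $e\subseteq V\setminus\sigma$, i.e.\ $e\cap\sigma=\emptyset$. Comparing the two criteria yields the equality of simplex sets, hence the isomorphism.

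I would also briefly note the edge cases to make sure nothing is swept under the rug: the empty simplex $\sigma=\emptyset$ lies in $\Theta(\mathcal H)$ (it is contained in the complement of every hyperedge, assuming $H$ is nonempty) and also lies in $AD(I(\mathcal H))$ iff $V$ itself is not independent, which is the same condition $H\neq\emptyset$. Conversely, a ``simplex'' $\sigma=V$ belongs to $\Theta(\mathcal H)$ only if some hyperedge is empty, matching the condition that $V\setminus V=\emptyset$ is not a simplex of $I(\mathcal H)$.

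There is no real obstacle here; the only point to be careful about is that the Alexander dual is taken with respect to the ambient vertex set $V$, so one must use the same $V$ on both sides. Once that convention is fixed, the two simplicial complexes are literally equal as subcomplexes of the full simplex on $V$, so the isomorphism is the identity on vertices.
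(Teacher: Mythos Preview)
Your proof is correct and follows essentially the same definition chase as the paper's own proof, which simply notes that a simplex fails to lie in $I(\mathcal H)$ precisely when it contains a hyperedge, so its complement omits that hyperedge and hence lies in $\Theta(\mathcal H)$. Your version is more carefully spelled out (including the edge cases and the remark about the common vertex set $V$), but the underlying argument is identical.
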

\begin{proof}
A simplex is in $I(\mathcal H)$ if it contains no hyperedge. Hence a simplex does not lie in $I(\mathcal H)$ if it contains a hyperedge, and the complement then omits at least one hyperedge. 
\end{proof}

The Alexander Dual complex exhibits a duality between homology and cohomology \cite{jonsson}.

\begin{theorem}
There is an isomorphism
$$\widetilde{H}^d(X)\cong \widetilde{H}_{|V|-d-3}(AD(X))$$
\end{theorem}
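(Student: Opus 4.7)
\medskip
\noindent The plan is to deduce this from classical topological Alexander duality together with a combinatorial identification of the sphere complement with $AD(X)$. Set $n=|V|$, and exclude the trivial case $X=\Delta^V$ in which both sides vanish in reduced (co)homology; then $X$ embeds as a subcomplex of $\partial\Delta^V\cong S^{n-2}$, the boundary of the full simplex on $V$.

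First I would invoke classical Alexander duality for compact triangulable subspaces of $S^{n-2}$, which yields
$$\widetilde{H}^d(X)\cong \widetilde{H}_{(n-2)-d-1}\bigl(S^{n-2}\setminus X\bigr)=\widetilde{H}_{n-d-3}\bigl(S^{n-2}\setminus X\bigr).$$
The remaining content of the theorem is therefore a homotopy equivalence $S^{n-2}\setminus X\simeq |AD(X)|$.

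To produce this equivalence I would work in the barycentric subdivision $\operatorname{sd}(\partial\Delta^V)$, whose vertices are the non-empty proper subsets of $V$. Each point of $\partial\Delta^V\setminus X$ lies in the interior of a unique smallest simplex $\tau\notin X$ of $\partial\Delta^V$; linear retraction toward the barycenter of $\tau$, carried out simplex-by-simplex in the subdivision, produces a deformation retraction onto the subcomplex $N\subset\operatorname{sd}(\partial\Delta^V)$ spanned by flags $\sigma_0\subsetneq\cdots\subsetneq\sigma_k$ of non-faces of $X$. The complementation map $\sigma\mapsto V\setminus\sigma$ is then an order-reversing bijection that, by the very definition of $AD(X)$, carries non-faces of $X$ to faces of $AD(X)$; it therefore identifies $N$ with $\operatorname{sd}(AD(X))$, which is homeomorphic to $|AD(X)|$.

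The step I expect to be the main obstacle is the deformation retraction: one has to check that the simplex-by-simplex retractions patch together consistently on $\partial\Delta^V\setminus X$ and that $N$ is genuinely a strong deformation retract, not merely a weak homotopy equivalent. The other two ingredients — classical Alexander duality and the complementation bijection between non-faces of $X$ and faces of $AD(X)$ — are, respectively, a standard black box and a direct unwinding of definitions.
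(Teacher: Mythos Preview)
The paper does not actually prove this theorem: it is stated as a known fact with a citation to Jonsson's book \cite{jonsson}, so there is no ``paper's own proof'' to compare against.

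Your proposal is correct and is in fact the standard argument for combinatorial Alexander duality (essentially the one found in the reference the paper cites). Embedding $X$ in $\partial\Delta^V\cong S^{n-2}$, invoking topological Alexander duality, and then identifying $S^{n-2}\setminus X$ with $|AD(X)|$ up to homotopy via the barycentric subdivision and the complementation bijection $\sigma\mapsto V\setminus\sigma$ is exactly how this is usually done. Your identification of the deformation retraction as the only nontrivial step is accurate; the usual way to handle it cleanly is to observe that $\partial\Delta^V\setminus X$ is the open star (in $\operatorname{sd}(\partial\Delta^V)$) of the full subcomplex $N$ on the barycenters of non-faces, and open stars of full subcomplexes deformation retract onto them. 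The only things to add are the degenerate edge cases you already flagged (when $X$ or $AD(X)$ is the full simplex or empty), where both sides are zero by convention.
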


Getting back to theta complexes, we prove a basic theorem.

\begin{theorem}
Suppose that $H$ is a finite hypergraph.
Then $\Theta(\mathcal H^*)\simeq \Theta(\mathcal H)$. (Here $\simeq$ denotes the equivalence relation of homotopy equivalence.)\end{theorem}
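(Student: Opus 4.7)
The plan is to exhibit $\Theta(\mathcal H)$ as a nice cover by contractible pieces and apply the Nerve Lemma, after which the nerve will turn out to be exactly $\Theta(\mathcal H^*)$. This is a classical move for complexes defined as unions of full simplices.

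First I would write $\Theta(\mathcal H) = \bigcup_{e\in H} \Delta_e$, where $\Delta_e$ denotes the full simplex on the vertex set $V\setminus e$. This is immediate from the definition: a face of $\Theta(\mathcal H)$ is by definition a finite subset of some $V\setminus e$, which is exactly a face of $\Delta_e$. For any finite subcollection $S\subseteq H$ we have
\[
\bigcap_{e\in S}\Delta_e \;=\;\Delta\Bigl(\bigcap_{e\in S}(V\setminus e)\Bigr)\;=\;\Delta\Bigl(V\setminus\bigcup_{e\in S} e\Bigr),
\]
which is either empty or a (contractible) full simplex. Thus $\{\Delta_e\}_{e\in H}$ is a good cover of $\Theta(\mathcal H)$ in the sense required by the Nerve Lemma, so $\Theta(\mathcal H)$ is homotopy equivalent to the nerve $\mathcal N$ of this cover.

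Next I would identify $\mathcal N$ with $\Theta(\mathcal H^*)$. By construction $\mathcal N$ has vertex set $H$ (the vertex set of $\mathcal H^*$), and a finite subset $S\subseteq H$ is a simplex of $\mathcal N$ iff $V\setminus\bigcup_{e\in S} e\neq\emptyset$, iff there exists $v\in V$ with $v\notin e$ for every $e\in S$. Recalling that the dual hyperedge attached to $v$ is $e_v^{*}=\{e\in H:v\in e\}$, this condition is exactly the condition that $S\subseteq H\setminus e_v^{*}$ for some $v\in V$, i.e.\ that $S$ is contained in the complement of some hyperedge of $\mathcal H^*$. That is precisely the criterion for $S$ to span a simplex of $\Theta(\mathcal H^*)$. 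Hence $\mathcal N=\Theta(\mathcal H^*)$, and combining with the Nerve Lemma step gives $\Theta(\mathcal H)\simeq\Theta(\mathcal H^*)$.

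The main obstacle is really just bookkeeping: making sure the nerve-combinatorics translates correctly under the involution $\mathcal H\leftrightarrow\mathcal H^*$ and handling degenerate cases cleanly (a hyperedge $e=V$ contributes an empty $\Delta_e$ and, correspondingly, no simplex of $\Theta(\mathcal H^*)$ uses $e$ as a vertex; if $H=\emptyset$ both sides are empty). Finiteness of $\mathcal H$ is used only to guarantee that the cover is finite so the version of the Nerve Lemma for simplicial complexes applies directly; in fact the argument goes through for arbitrary $\mathcal H$ provided one uses a suitably general nerve theorem, but the finite case suffices for all applications in this paper.
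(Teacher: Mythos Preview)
Your proof is correct and follows essentially the same route as the paper: cover $\Theta(\mathcal H)$ by the full simplices $\Delta_e$ on $V\setminus e$, apply the Nerve Lemma, and identify the nerve with $\Theta(\mathcal H^*)$. The only cosmetic difference is that the paper thickens each $\Delta_e$ to an open neighborhood $U_h$ so as to quote the open-cover version of the nerve theorem (Hatcher, Corollary 4G.3), whereas you invoke a simplicial/closed-cover version directly; the combinatorics identifying the nerve are verbatim the same.
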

\begin{proof}
Let the vertex of $\mathcal H^*$ corresponding to the hyperedge $h$ be denoted $v_h$ and let the hyperedge of $\mathcal H^*$ corresponding to the vertex $v$ be denoted $h_v$.

We use the theorem that the nerve of an open cover of a paracompact space such that all finite intersections are contractible or empty (a good cover) is homotopy equivalent to the original space. (\cite{H} Corollary 4G.3 p459). Cover $\Theta(\mathcal H)$ by open sets $U_h$ for each hyperedge $h$, defined to be small neighborhoods of the simplices represented by complements of the hyperedges $h$. Then this is a good cover. (It is a cover by the hypothesis that every vertex avoids at least one hyperedge.) So,  the nerve complex $\mathcal N$ has a vertex $v_h$ for each hyperedge $h$ of $\mathcal H$.
An intersection of the sets $U_{h_1}\cap U_{h_2}\cap\cdots\cap U_{h_k}$ is nonempty iff the corresponding simplices have at least one vertex in common, which is to say there is some vertex $v$ of $\mathcal H$ such that $v\not\in h_i$ for all $i$. So
\begin{align*}
&[v_{h_1},\ldots,v_{h_k}] \text{ is a simplex of } \mathcal N \Leftrightarrow\\
&\text{There is some }v\text{ such that }v\not\in h_i\text{ for any } i\Leftrightarrow\\
&\text{There is some }v\text{ such that }\{v_{h_1},\ldots, v_{h_k}\}\subset h_v^c\Leftrightarrow \\
&[v_{h_1},\ldots,v_{h_k}] \text{ is a simplex of } \Theta(\mathcal H^*)
\end{align*}
\end{proof}

One may wonder whether disconnected hypergraphs can be analyzed in terms of their components. The following proposition offers an affirmative answer.

\begin{proposition}\label{whichone}
Consider the disjoint union of hypergraphs $\mathcal H_1\coprod \mathcal H_2$. Then
$$\Theta\left(\mathcal H_1\coprod \mathcal H_2\right)\simeq \Sigma(\Theta(\mathcal H_1)*\Theta\left(\mathcal H_2)\right).$$(Here $\Sigma$ represents suspension and $*$ represent the join.)
\end{proposition}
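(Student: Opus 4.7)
The plan is to decompose $\Theta(\mathcal{H}_1\coprod\mathcal{H}_2)$ as a union of two contractible subcomplexes whose intersection is $\Theta(\mathcal{H}_1)*\Theta(\mathcal{H}_2)$, and then apply the standard homotopy pushout principle that says a space covered by two contractible pieces is the suspension of their intersection.

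Write $\mathcal{H}_i=(V_i,H_i)$ and let $\Delta(V_i)$ denote the full simplex on $V_i$. First I would partition the covering hyperedges by which side they come from: a simplex $\sigma$ of $X:=\Theta(\mathcal{H}_1\coprod\mathcal{H}_2)$ is by definition contained in the complement of some hyperedge $h$, and $h$ lies in either $H_1$ or $H_2$. Let $A\subset X$ be the subcomplex of simplices contained in $(V_1\setminus h)\sqcup V_2$ for some $h\in H_1$, and $B\subset X$ the subcomplex of simplices contained in $V_1\sqcup(V_2\setminus h')$ for some $h'\in H_2$. Then $X=A\cup B$ by construction.

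Next I would identify these subcomplexes with joins. For $\sigma=\sigma_1\sqcup\sigma_2$ with $\sigma_i\subseteq V_i$, the condition defining $A$ is that $\sigma_1$ avoids some $h\in H_1$ while $\sigma_2$ is arbitrary. So $A=\Theta(\mathcal{H}_1)*\Delta(V_2)$ and similarly $B=\Delta(V_1)*\Theta(\mathcal{H}_2)$. Likewise, $\sigma\in A\cap B$ means $\sigma_1\in\Theta(\mathcal{H}_1)$ and $\sigma_2\in\Theta(\mathcal{H}_2)$, so $A\cap B=\Theta(\mathcal{H}_1)*\Theta(\mathcal{H}_2)$. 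Because the join of any space with a (nonempty) simplex is a cone, both $A$ and $B$ are contractible.

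Finally, since $A$ and $B$ are subcomplexes of a simplicial complex, the inclusions of $A\cap B$ into $A$ and $B$ are cofibrations, so the strict pushout $A\cup_{A\cap B}B=X$ computes the homotopy pushout. The homotopy pushout of the diagram $\mathrm{pt}\leftarrow A\cap B\rightarrow\mathrm{pt}$ is by definition the unreduced suspension $\Sigma(A\cap B)$, yielding the claimed equivalence
$$\Theta(\mathcal{H}_1\coprod\mathcal{H}_2)\simeq\Sigma\bigl(\Theta(\mathcal{H}_1)*\Theta(\mathcal{H}_2)\bigr).$$
The only point that requires a bit of care is the bookkeeping in the identification of $A$ and $B$ as joins (treating the empty simplex correctly so that $\sigma_1=\emptyset$ or $\sigma_2=\emptyset$ is allowed), but once that is in place, the argument is a direct application of the suspension-of-intersection lemma and should not present a serious obstacle.
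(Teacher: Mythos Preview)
Your proof is correct and follows essentially the same route as the paper: both decompose $\Theta(\mathcal H_1\coprod\mathcal H_2)$ as $(\Theta(\mathcal H_1)*\Delta(V_2))\cup(\Delta(V_1)*\Theta(\mathcal H_2))$ with intersection $\Theta(\mathcal H_1)*\Theta(\mathcal H_2)$. The only difference is in packaging the last step: you invoke the standard ``union of two contractible cofibrant pieces is the suspension of the intersection'' lemma directly, while the paper proves the equivalent statement by hand, replacing each pair $(\Delta(V_i),\Theta(\mathcal H_i))$ with $(C(\Theta(\mathcal H_i)),\Theta(\mathcal H_i))$ via a relative homotopy equivalence and then observing that $C(K_1)*K_2=C(K_1*K_2)=K_1*C(K_2)$.
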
 
\begin{proof}
Let $B_i$ be the simplex spanned by the vertex set of $\mathcal H_i$. Then $\Theta(\mathcal H_i)\subset B_i$.
 In order to be a simplex in $\Theta
(\mathcal H_1\coprod \mathcal H_2)$ you can either miss an edge in $\mathcal H_1$ or one in $\mathcal H_2$. Thus, $\Theta(\mathcal H_1\coprod \mathcal H_2)=(B_1*\Theta(\mathcal H_2))\cup (\Theta(\mathcal H_1)*B_2)\subset B_1*B_2$. The proposition now follows from the following general statement: if $K_i\subset B_i$ is an inclusion of cell complexes, with $B_i$ contractible, then $(K_1*B_2)\cup (B_1*K_2)\subset B_1*B_2$ is homotopy equivalent to $\Sigma(K_1*K_2)$. 
When $B_i=C(K_i)$, where ``$C$" denotes the cone of a space, we exactly get $\Sigma(K_1*K_2)$, since $C(K_1)*K_2=C(K_1*K_2)=K_1*C(K_2)$, and in fact we can reduce to this case by showing that the pair 
$(B_i,K_i)$ is homotopy equivalent to $(C(K_i),K_i)$ rel $K_i$. Clearly $(B_i\times\{0\},K_i\times\{0\})\simeq (B_i\times[0,1],K_i\times\{0\})$ rel $K_i\times\{0\}$. This is homotopy equivalent to $((K_i\times[0,1])\cup (B_i\times\{1\}),K_i\times\{0\})$ rel $K_i\times\{0\}$. Finally contracting $B_i\times\{1\}$ yields the desired result.
\end{proof}

\begin{corollary}
Suppose a hypergraph $\mathcal H$ has an isolated vertex. (That is no hyperedge contains it.) Then $\Theta(\mathcal H)$ is contractible.
\end{corollary}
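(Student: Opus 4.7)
The plan is to argue that the isolated vertex itself serves as a cone point for $\Theta(\mathcal H)$, so that $\Theta(\mathcal H)$ is a simplicial cone and therefore contractible. This is the most direct strategy, and I expect the argument to be essentially immediate from unpacking the definition of $\Theta$, with no real obstacle.

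First I would translate the hypothesis. Let $v \in V$ be the isolated vertex; by assumption no hyperedge $h \in H$ contains $v$, which we can rephrase as $v \in V \setminus h$ for every hyperedge $h$.

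Next I would verify the cone point property at $v$. By definition, a finite subset $\sigma \subseteq V$ is a simplex of $\Theta(\mathcal H)$ if and only if there exists some hyperedge $h$ with $\sigma \subseteq V\setminus h$. Given such a $\sigma$ with witness $h$, the previous step guarantees $v \in V\setminus h$, so $\sigma \cup \{v\} \subseteq V \setminus h$ is again a simplex of $\Theta(\mathcal H)$. Thus every simplex of $\Theta(\mathcal H)$ extends to a simplex containing $v$, which exhibits $v$ as a cone vertex; the complex is the simplicial cone on its link of $v$, hence contractible.

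As a cross check, one could instead try to deduce the corollary from Proposition \ref{whichone} by splitting $\mathcal H$ as the disjoint union of $\mathcal H \setminus \{v\}$ with the one-vertex hypergraph $(\{v\},\emptyset)$. This would require fixing a convention for $\Theta$ on a hypergraph with no hyperedges and then tracking a suspension/join identity, so the direct cone-point argument above is cleaner and is the one I would write up.
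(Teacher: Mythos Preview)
Your cone-point argument is correct and complete: once $v$ lies in every complement $V\setminus h$, adjoining $v$ to any simplex of $\Theta(\mathcal H)$ keeps it inside some $V\setminus h$, so $\Theta(\mathcal H)=\operatorname{star}(v)$ is a cone.

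The paper gives no explicit proof but places the statement as a corollary of Proposition~\ref{whichone}, so its intended route is to split $\mathcal H=\mathcal H'\coprod(\{v\},\emptyset)$ and invoke the disjoint-union formula. As you yourself observe, that route requires tracking the degenerate case $\Theta(\{v\},\emptyset)=\emptyset$, where the suspension/join identity of Proposition~\ref{whichone} is delicate; one really has to go back to the intermediate description $\Theta(\mathcal H)=(B_1*\Theta(\mathcal H_2))\cup(\Theta(\mathcal H_1)*B_2)$ in that proof, which in this degenerate case collapses to $\Theta(\mathcal H')*\{v\}$---exactly your cone. So the two arguments end at the same place, but yours gets there without the detour through joins and suspensions and without any convention-checking. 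The Proposition~\ref{whichone} route buys nothing extra here; your direct argument is the right one to write up.
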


To finish this section, we record the fact that the class of theta complexes includes all simplicial complexes.
\begin{proposition}
Let $K$ be a simplicial complex. Then there is a hypergraph $\mathcal H$ such that $\Theta(\mathcal H)=K$.
\end{proposition}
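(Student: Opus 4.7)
The plan is to build $\mathcal H$ directly from the simplex data of $K$. Let $V$ be the vertex set of $K$, and define $\mathcal H = (V,H)$ by declaring
$$H = \{\,V\setminus\sigma : \sigma\in K\,\}.$$
That is, the hyperedges of $\mathcal H$ are exactly the complements (in $V$) of the simplices of $K$. Then a finite set $\tau\subseteq V$ is a simplex of $\Theta(\mathcal H)$ iff there is a hyperedge $h\in H$ with $\tau\subseteq V\setminus h$, which by construction means $\tau\subseteq\sigma$ for some $\sigma\in K$. Since $K$ is closed under taking subsets, this is equivalent to $\tau\in K$, yielding $\Theta(\mathcal H)=K$.

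A sanity check on vertices: each vertex $v$ of $K$ has $\{v\}\in K$, so $V\setminus\{v\}\in H$, and $v$ lies in the complement of this hyperedge, so $v$ appears as a vertex of $\Theta(\mathcal H)$. When $K$ is finite one can economize and take only the complements of the facets of $K$ as hyperedges; downward closure of $K$ still makes every simplex of $K$ realized as a subset of the complement of some such hyperedge. I do not anticipate a genuine obstacle here: the construction is essentially forced by unraveling the definition of $\Theta$, and the verification reduces to the observation that $K$ is determined by its collection of simplices viewed as ``allowed'' subsets of $V$, which is exactly the information that complementation packages into a hypergraph.
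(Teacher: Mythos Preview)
Your construction is correct and is exactly the one given in the paper: take $\mathcal H$ to have the same vertex set as $K$, with a hyperedge $V\setminus\sigma$ for every simplex $\sigma\in K$. The paper's proof is a single sentence stating this; your additional verification and the remark about using only facets are fine elaborations but not needed.
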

\begin{proof}
Let $\mathcal H$ have the same vertex set as $K$ and for every simplex of $K$ let the complement of the vertices spanning it be a hyperedge.
\end{proof}
\section{Boolean Formulae}
A Boolean formula is a well-formed formula constructed from variables $x_1,\ldots,x_n$ and the basic logical operations of $\vee$ (OR), $\wedge$ (AND), and $\neg$ (NOT). Negation of a variable is also denoted with an overbar. 

\begin{definition}
\begin{enumerate}
\item The formula $\phi_1\vee \phi_2\vee\cdots\vee \phi_k$ is said to be the disjunction of the formulas $\phi_i$.
\item The formula $\phi_1\wedge \phi_2\wedge \cdots\wedge \phi_k$ is said to be the conjunction of the formulas $\phi_i$.
\item A \emph{literal} is a variable, $x_i$, or its negation, $\overline{x_i}$. 
\item A formula is in conjunctive normal form if it is a conjunction of clauses where each clause is a disjunction of literals, no clauses are duplicated, and the same variable does not appear twice in any clause.
\item A formula is in $\ell$-conjunctive normal form if it is in conjunctive normal form where every clause contains $\ell$ literals. 
\end{enumerate}
\end{definition}

The importance of the class of $\ell$-conjunctive formulas, as mentioned in the introduction, is indicated by the fact that checking the satisfiability of a $2$-conjunctive formula is a P problem (called 2-SAT), whereas checking the satisfiability of a $3$-conjunctive formula is an NP complete problem (called 3-SAT). 

\begin{definition}
Let $\ell\text{-}SAT\text{-}n$ denote the set of satisfiable $\ell$-conjunctive formulas in the variables $x_1,\ldots,x_n$.  Define  $|\ell\text{-}SAT\text{-}n|$, the geometric realization, to be the simplicial complex with vertex set equal to $\ell\text{-}SAT\text{-}n$, and a $k$-simplex $[\phi_0,\ldots,\phi_k]$ whenever we have the chain of implications $$\phi_0\Rightarrow\phi_1\Rightarrow \cdots\Rightarrow \phi_k.$$
\end{definition}

\noindent{\bf Remark:} This definition mimics the definition of the geometric realization of a poset. The set $\ell\text{-}SAT\text{-}n$ is not actually a poset under $\Rightarrow$ because there are logically equivalent but distinct formulae. For example $(x_1\vee x_2)\wedge (\bar{x}_1\vee x_2)$ is equivalent to $(x_1\vee x_2)\wedge (\bar{x}_1\vee x_2)\wedge (x_3\vee x_2)$. 

\begin{definition}
Let $\operatorname{Cube}(n,k)$ be the hypergraph whose vertices are the vertices of the $n$-cube and whose hyperedges are the sets of vertices spanning $k$-dimensional faces of the $n$-cube.
\end{definition}

\begin{theorem}
\label{cubenerve}
There is a homotopy equivalence $$|\ell\text{-}SAT\text{-}n|\simeq \Theta(\operatorname{Cube}(n,n-\ell))$$
\end{theorem}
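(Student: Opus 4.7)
The plan is to realize $|\ell\text{-}SAT\text{-}n|$ as the nerve of a natural cover indexed by truth assignments, and to identify that nerve with $\Theta(\operatorname{Cube}(n,n-\ell))$. I would begin by recording the bijection between $\ell$-clauses and hyperedges of $\operatorname{Cube}(n,n-\ell)$: a clause $c = L_1\vee\cdots\vee L_\ell$ is falsified by exactly those assignments fixing the $\ell$ variables appearing in $c$ to the unique values making every $L_j$ false, so its set of falsifying assignments is an $(n-\ell)$-face of the $n$-cube and every $(n-\ell)$-face arises this way. Writing $h_c$ for this hyperedge, a truth assignment $\tau$ satisfies $c$ iff $\tau\notin h_c$.

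For each $\tau\in\{0,1\}^n$, let $A_\tau\subseteq |\ell\text{-}SAT\text{-}n|$ be the full subcomplex spanned by the satisfiable formulas that $\tau$ satisfies. These are subcomplexes, and they cover $|\ell\text{-}SAT\text{-}n|$: given any simplex $[\phi_0,\ldots,\phi_k]$ with $\phi_0\Rightarrow\cdots\Rightarrow\phi_k$, any $\tau$ satisfying the strongest formula $\phi_0$ satisfies every $\phi_i$, placing the simplex in $A_\tau$. Each $A_\tau$ is contractible. Define $M_\tau$ to be the conjunction of all $\ell$-clauses satisfied by $\tau$; this is a legitimate satisfiable $\ell$-CNF formula (nonempty whenever $n\geq\ell$, and trivially satisfied by $\tau$). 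Every $\phi\in A_\tau$ has each of its clauses satisfied by $\tau$, hence appearing in $M_\tau$, so $\phi\subseteq M_\tau$ as sets of clauses and therefore $M_\tau\Rightarrow\phi$. Consequently, for every simplex $[\phi_0,\ldots,\phi_k]$ of $A_\tau$, the chain of implications $M_\tau\Rightarrow\phi_0\Rightarrow\cdots\Rightarrow\phi_k$ shows that $\{M_\tau,\phi_0,\ldots,\phi_k\}$ is again a simplex of $A_\tau$, exhibiting $A_\tau$ as a simplicial cone on $M_\tau$.

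The identical argument applied to any nonempty intersection $A_{\tau_1}\cap\cdots\cap A_{\tau_k}$ shows it is a cone on $M_{\tau_1,\ldots,\tau_k}$, the conjunction of all $\ell$-clauses satisfied by every $\tau_i$; this conjunction is nonempty precisely because the intersection was. Invoking the nerve theorem for closed covers by subcomplexes with contractible nonempty intersections, $|\ell\text{-}SAT\text{-}n|$ is homotopy equivalent to the nerve $\mathcal N$. A set $\{\tau_1,\ldots,\tau_k\}$ is a simplex of $\mathcal N$ iff some $\ell$-clause $c$ is satisfied by every $\tau_i$, equivalently $\{\tau_1,\ldots,\tau_k\}\subseteq V\setminus h_c$ for some hyperedge $h_c$ — and this is exactly the defining condition for a simplex of $\Theta(\operatorname{Cube}(n,n-\ell))$. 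Hence $\mathcal N=\Theta(\operatorname{Cube}(n,n-\ell))$ and the theorem follows.

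The main obstacle I anticipate is invoking the nerve theorem in the correct form: the version cited earlier in the paper (Corollary 4G.3 of \cite{H}) is for open covers, whereas we have a closed cover by subcomplexes. Either one appeals to the standard closed-cover analogue for locally finite covers of CW complexes with contractible nonempty intersections, or one replaces each $A_\tau$ by an open regular neighborhood $U_\tau$ in $|\ell\text{-}SAT\text{-}n|$, verifies that finite intersections of the $U_\tau$ deformation retract onto the corresponding intersections of the $A_\tau$ (which are contractible as shown), and then applies the open nerve theorem exactly as in the proof of $\Theta(\mathcal H)\simeq\Theta(\mathcal H^*)$ given above.
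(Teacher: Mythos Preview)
Your proposal is correct and follows essentially the same approach as the paper: cover $|\ell\text{-}SAT\text{-}n|$ by the pieces indexed by truth assignments, show each nonempty intersection is a cone on the conjunction of all clauses satisfied by the relevant $\tau_i$'s, apply the nerve theorem, and identify the nerve with $\Theta(\operatorname{Cube}(n,n-\ell))$ via the clause/$(n-\ell)$-face correspondence. The paper works directly with open neighborhoods $U_\tau$ of your $A_\tau$ from the outset (exactly the fix you propose in your last paragraph), and its cone point---the conjunction of all formulae appearing in the intersection, with duplicate clauses removed---coincides with your $M_{\tau_1,\ldots,\tau_k}$, since single clauses are themselves vertices of the complex.
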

\begin{proof}
Fix an assignment, $\tau$, of ``T" or ``F" to each variable $x_1,\ldots,x_n$. Form an open cover $\{U_\tau\}$ of $|\ell\text{-}SAT\text{-}n|$ as follows. $U_\tau$ is a small neighborhood of the union of simplices $[\phi_0,\cdots,\phi_k]$ where $\tau$ is a satisfaction for each formula $\phi_i$ in the simplex. We claim that any nonempty intersection of these is contractible. Consider the set of formulae which are vertices in $\cap_{i} U_{\tau_i}$. Take the conjunction of all these formulae, removing duplicate clauses. This is still satisfied by each $\tau_i$, and furthermore implies every formula in the intersection. Thus the intersection is a cone on this formula. So the $U_\tau$'s form a good cover. We consider the nerve of this cover. The vertices correspond to truth assignments $\tau$ and these are in $1-1$ correspondence with vertices of the $n$-cube. Now let us consider which collections of $U_\tau$ have nontrivial intersection.
Note that the clause $x_{i_1}\vee\cdots\vee x_{i_k}$ is satisfiable away from the $(n-k)$-face of the cube $x_{i_1}=F,x_{i_2}=F,\ldots,x_{i_k}=F$, and similarly for negated variables. So each clause is satisfiable in the complement of an $(n-k)$-face of the cube. So if $\{\tau_1,\ldots,\tau_m\}$ avoids an $(n-k)$-face, the intersection $\cap_i U_{\tau_i}$ is nonempty, since the clause corresponding to that face is in the intersection. Similarly, if $\{\tau_1,\ldots,\tau_m\}$ hits every $(n-k)$-face, then a formula in the intersection $\cap_i U_{\tau_i}$ could not contain any clause, meaning that the intersection is actually empty. 
\end{proof}

\section{Discrete Vector Fields}\label{vecsection}

Let $K$ be a finite simplicial complex. A vector is defined to be a pair of simplices ${(\sigma,\tau)}$ such that $\sigma$ is a codimension $1$ face of $\tau$. A vector field, by definition, is a collection of vectors so that no simplex appears in more than one vector. The critical simplices, by definition, are those that do not appear in any vector. A gradient path with respect to a given vector field is a sequence of simplices
$$\sigma_1,\tau_1,\sigma_2,\tau_2,\ldots,\sigma_k,\tau_k$$
such that each $(\sigma_i,\tau_i)$ is a vector, and $\sigma_{i+1}$ is a codimension $1$ face of $\tau_i$ distinct from $\sigma_i$. A vector field is said to be a gradient field if no gradient path is a loop. The importance of this definition is the following result \cite{Form1,Form2}.

\begin{theorem}\label{stuff}
If $K$ is a simplicial complex with a gradient  field, then it is homotopy equivalent to a cell complex with one $i$-cell for every critical $i$-simplex. \end{theorem}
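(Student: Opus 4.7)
The plan is to interpret the gradient field as a scheduling of elementary simplicial collapses that reduce $K$ up to homotopy to a CW complex whose cells are precisely the critical simplices. The acyclicity hypothesis is exactly what makes such a schedule possible.

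The combinatorial tool is the \emph{modified Hasse diagram} $\mathcal D$: its vertices are the simplices of $K$, with a standard directed edge $\tau\to\sigma$ for each codimension-$1$ incidence $\sigma<\tau$ that is \emph{not} paired in the vector field, and a reversed edge $\sigma\to\tau$ for each vector $(\sigma,\tau)$. A direct check shows that any directed cycle in $\mathcal D$ must alternate reversed (vector) edges with standard ones, and therefore corresponds to a closed gradient path; the hypothesis that no gradient path is a loop thus gives acyclicity of $\mathcal D$.

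Grouping the simplices into building blocks --- one block per critical simplex and one block per vector pair --- and contracting each pair to a single vertex, one obtains an acyclic directed graph on the set of blocks, hence a topological ordering. Reversing this order yields a filtration $\emptyset=K_0\subset K_1\subset\cdots\subset K_N=K$ by subcomplexes in which each $K_{t+1}$ is obtained from $K_t$ by attaching one building block. When a critical $i$-simplex is added, it attaches along its boundary (already present in $K_t$) and is declared a new $i$-cell. When a vector pair $(\sigma,\tau)$ is added, the topological order forces $\sigma$ to be a free face of $\tau$ in $K_{t+1}$, so the pair is an elementary collapse and can be retracted away without changing the homotopy type. Iterating produces the desired CW complex.

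The main obstacle is verifying, in the pair case, that $\sigma$ genuinely is free in $K_{t+1}$; that is, no simplex $\tau'\neq\tau$ of $K_{t+1}$ has $\sigma$ as a face. Any such $\tau'$ (reducing to the codimension-$1$ case via a chain of faces) contributes a standard edge in $\mathcal D$ pointing toward the block containing $\sigma$, which by the topological ordering forces $\tau'$ to lie in a block appearing later in the reversed filtration, ruling it out of $K_{t+1}$. The careful verification of this scheduling, together with the bookkeeping needed to propagate the attaching maps of critical cells through the subsequent collapses into maps landing in the critical-only CW complex, is the technical heart of the argument.
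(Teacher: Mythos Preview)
The paper does not prove this theorem; it is quoted as a known result with a citation to Forman \cite{Form1,Form2}, so there is no ``paper's own proof'' to compare against. Your sketch is a correct outline of one of the standard arguments (essentially the acyclic-matching formulation): the modified Hasse diagram is acyclic precisely when there are no closed gradient paths, a topological sort of the block quotient produces a filtration by subcomplexes, and each vector pair enters as a free-face collapse while each critical simplex enters as a new cell. Your verification that $\sigma$ is free in $K_{t+1}$ is the right one, and the same edge-direction check also shows each $K_t$ is genuinely a subcomplex. This is close in spirit to Forman's original level-set argument and essentially the approach found in later expositions; nothing is missing at the level of a sketch.
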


Given a simplicial complex, $K$, choose a sequence of distinct vertices $v_1,\ldots,v_n$. This gives rise to a vector field $\mathbf D_{v_1,\ldots,v_n}$ defined recursively in the following way. 
Let $\mathbf D^1=\{(\sigma,\sigma\cup\{v_1\})\}$ where $\sigma$ ranges over all simplices not containing $v_1$ which are in $K$ and such that $\sigma\cup\{v_1\}$ is also in $K$. Let $C^1$ be the set of critical simplices of this vector field. Now, given $\mathbf D^i$ and $C^i$ define the vector field 
$$\mathbf D^{i+1}=\mathbf D^i\cup\{(\sigma,\sigma\cup\{v_{i+1}\}):v_{i+1}\not\in\sigma\in C^i,\sigma\cup\{v_{i+1}\}\in C^i\},$$
and let $C^{i+1}$ be the critical simplices of this vector field. Finally $\mathbf D_{v_1,\ldots,v_n}:=\mathbf D^n$.

A vector field of this form is called \emph{sequential}.

The following proposition is frequently a time-saver.
\begin{proposition}
A sequential vector field $\mathbf D_{v_1,\ldots, v_n}$ is always gradient.
\end{proposition}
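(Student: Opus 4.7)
The plan is to prove by induction on the stage index $i$ that $\mathbf D^i$ is gradient. The base case $\mathbf D^0=\emptyset$ is trivial, and the recursive definition gives $\mathbf D^{i+1} = \mathbf D^i \cup M_{i+1}$, where $M_{i+1}$ consists of the new pairs $(\sigma,\sigma\cup\{v_{i+1}\})$ with both $\sigma$ and $\sigma\cup\{v_{i+1}\}$ in the set $C^i$ of critical simplices after stage $i$. So it suffices to check that if $\mathbf D^i$ has no gradient loop then neither does $\mathbf D^{i+1}$.

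The first observation is that $M_{i+1}$ itself is acyclic when regarded as a matching on $C^i$: a gradient path in $M_{i+1}$ begins with a vector $(\sigma,\sigma\cup\{v_{i+1}\})$, but the next source must be a codimension-$1$ face of $\sigma\cup\{v_{i+1}\}$ distinct from $\sigma$, and so it necessarily contains $v_{i+1}$. Since the source of any $M_{i+1}$-vector avoids $v_{i+1}$, the path cannot continue, and no loops can occur inside $M_{i+1}$ alone.

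To combine the acyclicity of $\mathbf D^i$ on all of $K$ with that of $M_{i+1}$ on $C^i$, I would argue by contradiction: suppose $\mathbf D^{i+1}$ has a gradient loop. By induction, the loop must contain at least one $M_{i+1}$-vector. After any such $M_{i+1}$-vector the next source contains $v_{i+1}$, so the next vector of the loop must lie in $\mathbf D^i$ (it cannot be in $M_{i+1}$). Consequently the loop breaks into alternating $M_{i+1}$-vectors and nonempty stretches of $\mathbf D^i$-vectors, and in each stretch the vertex $v_{i+1}$ is carried through the successive sources until it is eventually dropped to reach the next $M_{i+1}$-source. The plan is to show that this configuration must collapse to either a loop in $\mathbf D^i$ or a loop in $M_{i+1}$, via an appropriate projection argument that deletes $v_{i+1}$ from the affected simplices along each $\mathbf D^i$-stretch and identifies the resulting shortened sequence with a genuine gradient loop in $\mathbf D^i$.

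The hardest step is executing this projection cleanly. One must verify that the simplices obtained after removing $v_{i+1}$ are still in $K$, are still paired by $\mathbf D^i$-vectors in the same way, and that the face relations along the projected path continue to satisfy the gradient-path condition. This is the standard patchwork/cluster-type argument in discrete Morse theory (in the spirit of Jonsson and Kozlov), but the case analysis for how $v_{i+1}$ enters and exits the sources along a given $\mathbf D^i$-stretch is the only substantive piece of work.
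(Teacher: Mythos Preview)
Your inductive strategy is not the paper's approach, and the step you flag as ``the only substantive piece of work'' is a genuine gap, not a routine case analysis.  The general statement you are implicitly relying on---that if $M$ is an acyclic matching on $K$ and $M'$ is an acyclic matching on the $M$-critical cells, then $M\cup M'$ is acyclic---is \emph{false}.  (Take the face poset with $a,b<c,d$; set $M=\{(a,c)\}$ and $M'=\{(b,d)\}$: each is acyclic, but $a,c,b,d,a,\dots$ is a loop for $M\cup M'$.)  So the inductive step genuinely needs the special feature of $M_{i+1}$, and your proposed ``projection by deleting $v_{i+1}$'' does not supply it: if $(\sigma,\tau)\in \mathbf D^i$ was created at stage $j$ and $v_{i+1}\in\sigma$, there is no reason that $(\sigma\setminus\{v_{i+1}\},\tau\setminus\{v_{i+1}\})$ lies in $\mathbf D^j$, since $\sigma\setminus\{v_{i+1}\}$ may already have been matched at an earlier stage.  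Thus the projected sequence is not obviously a $\mathbf D^i$-gradient path at all, and the Patchwork/Cluster lemmas you cite require a poset map with controlled fibres, which you have not produced.

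The paper avoids all of this with a four-line minimal-index argument: given a hypothetical loop, let $k$ be the least stage whose vectors appear in it.  Since the loop is closed, the vertex $v_k$ must be removed at some step $\tau_i\to\sigma_{i+1}$.  By minimality of $k$, the next vector $(\sigma_{i+1},\tau_{i+1})$ is created at some stage $\ell\ge k$, so $\sigma_{i+1}\in C^{\ell-1}\subset C^{k-1}$; but then $(\sigma_{i+1},\sigma_{i+1}\cup\{v_k\})=(\sigma_{i+1},\tau_i)$ is already a vector at stage $k$, forcing $\ell=k$ and $\tau_{i+1}=\tau_i$, which contradicts the vector-field condition that $\tau_i$ belongs to only one vector.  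The whole proof is this single contradiction; no induction or projection is needed.
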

\begin{proof}
Suppose we have a gradient loop.
Let $k$ be the minimal number such that a vector $(\sigma,\sigma\cup\{v_k\})$ appears in the gradient loop. Since we have a loop, at some point the vertex $v_k$ will have to be removed when passing from some $\tau_i$ to $\sigma_{i+1}$.  Now by minimality of $k$, we must have $\tau_{i+1}=\sigma_{i+1}\cup \{v_\ell\}$ for $\ell\geq k$. So we have that $\sigma_{i+1}\in C^{\ell-1}\subset C^{k-1}$.
But then $(\sigma_{i+1},\sigma_{i+1}\cup\{v_k\})$ is a vector in $\mathbf D^k$. So $v_k=v_\ell$, which is a contradiction.
\end{proof}

\section{Calculations and Conjectures}\label{calcsection}

\subsection{Graphs}
Graphs are among the most tractable hypergraphs to analyze. Hence we start with some calculations in this context to give the reader a feel for how vector fields work. The computations in this section are well-known for the Alexander dual independence complexes. See, for example \cite{kozlov}.

\begin{example}Let $I_n$ denote the graph which is $n$ edges joined end to end. Here is a picture of $I_5$. 
\begin{center}
\includegraphics[width=3in]{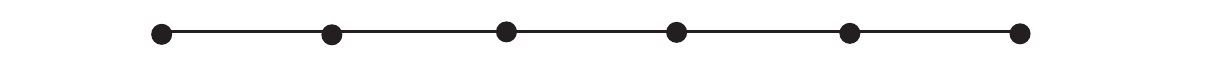}
\end{center}
Number the vertices left to right $1,\ldots, n+1$. Create a sequential vector field on $\Theta(I_n)$ as follows. First form all legal pairs of simplices $(\sigma,\sigma\cup\{1\})$. This leaves the singleton simplex $\{1\}$ unpaired, as well as all simplices which only avoid the edge between $1$ and $2$. These can be pictured thus:
\begin{center}
\includegraphics[width=3in]{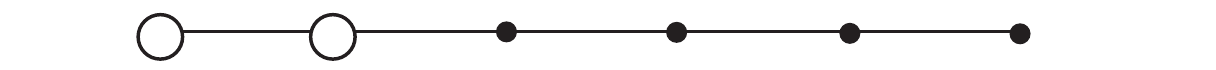}
\end{center}
Here the open circles indicate that those vertices are missing from the simplex. But now we know that the vertex $3$ must be in the simplex since otherwise the edge $\{2,3\}$ would be avoided. This we denote with a filled-in circle.
\begin{center}
\includegraphics[width=3in]{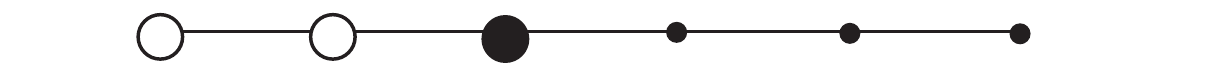}
\end{center}  
Now amongst these simplices, we form all legal pairs $(\sigma,\sigma\cup\{4\})$. Notice that if $\sigma$ is a simplex left over from the $1$ pairing (except $\{1\}$, which we leave alone for the rest of the calculation), and it doesn't contain $4$, then $\sigma\cup\{4\}$ is again a simplex of the same form: it avoids only the edge containing $1$. On the other hand, if $\tau$ contains $4$ but not $5$, then $\tau\setminus\{4\}$ avoids the edge $(4,5)$, and so was already paired at the first step. So the simplices unpaired after this second stage are of the form:
\begin{center}
\includegraphics[width=3in]{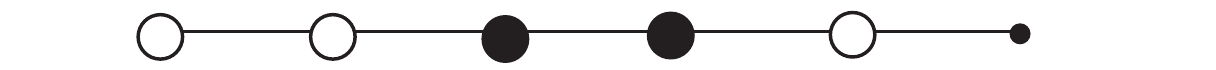}
\end{center}  
and again, the open vertex at $5$ implies the vertex at $6$ must be in the simplex.
\begin{center}
\includegraphics[width=3in]{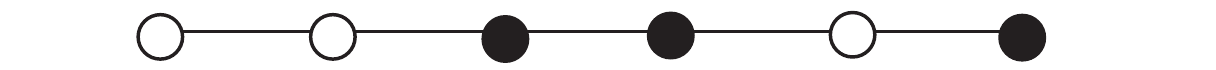}
\end{center} 
Our pictured example is now done. There is one critical simplex of dimension $2$ as pictured (with vertices \{3,4,6\}) together with the critical simplex $\{1\}$. Thus, using Theorem \ref{stuff}, $\Theta(I_5)\simeq S^2$. In general, continue this process, constructing the sequential vector field $\mathbf D_{1,4,7,\ldots,3m+1}$ where $m$ is the largest integer such that $3m+1\leq n+1$. 
 There are three cases depending on the congruence class of $n$ modulo $3$. If $n$ is divisible by $3$, then the end of the interval will look like this at the penultimate stage:
\begin{center}
\includegraphics[width=2in]{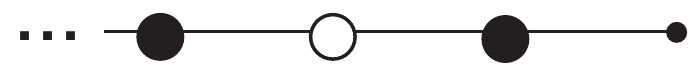}
\end{center} 
The last step will pair the two possible simplices together, demonstrating that $\Theta(I_n)$ is contractible. In the other two cases exactly one simplex will be left over. The exact formula is as follows:
 $$\Theta(I_n)\simeq\begin{cases}\bullet & n=3k\\
 S^{2k-1}& n=3k+1\\
 S^{2k}& n=3k+2\\
 \end{cases}$$
\end{example}

We move on to a slightly more complicated example.

\begin{example}
Let $P_n$ be the graph which is an $n$-sided polygon. For example, consider $P_9$, with vertices numbered cyclically around the polygon. Now create the vector field with all possible vectors
$(\sigma,\sigma\cup\{1\})$. The unpaired simplices are $\{1\}$ and those which only avoid an edge containing 1. Thus there are three possibilities:
\begin{center}
\includegraphics[width=3in]{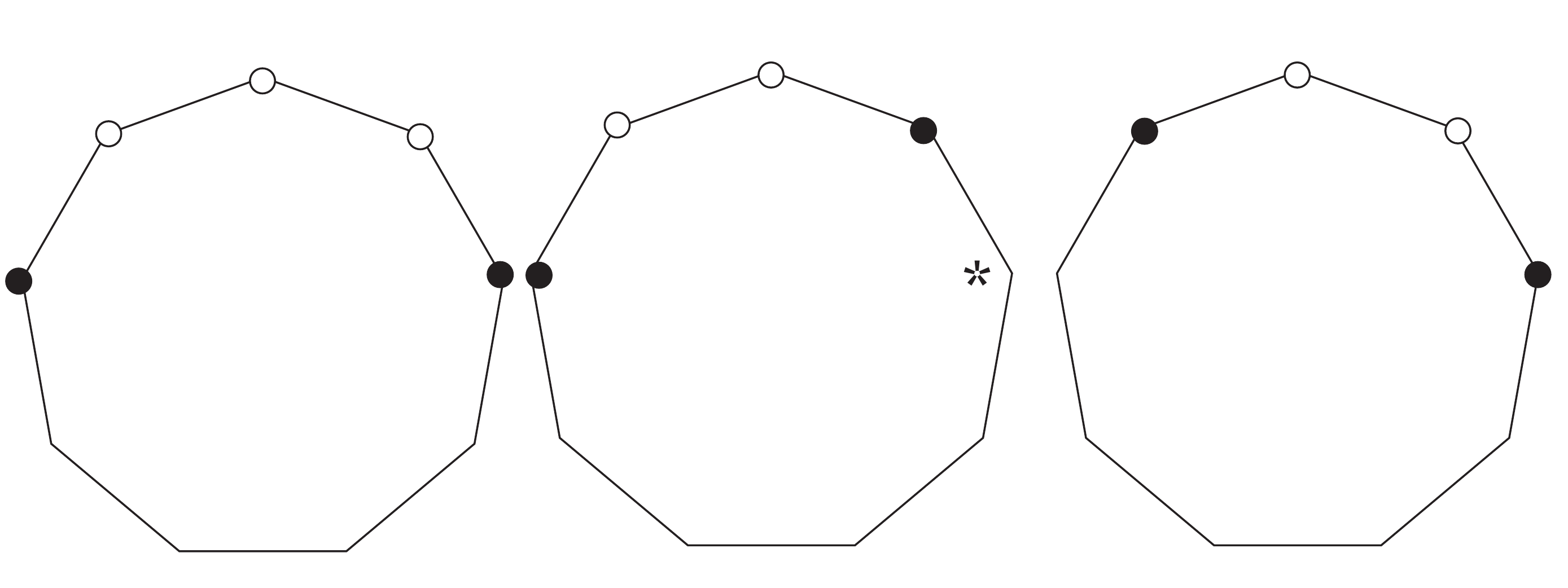}
\end{center} 
Now continue forming the sequential vector field by considering the starred vertex. This won't affect the two other pictured cases, so we get
\begin{center}
\includegraphics[width=3in]{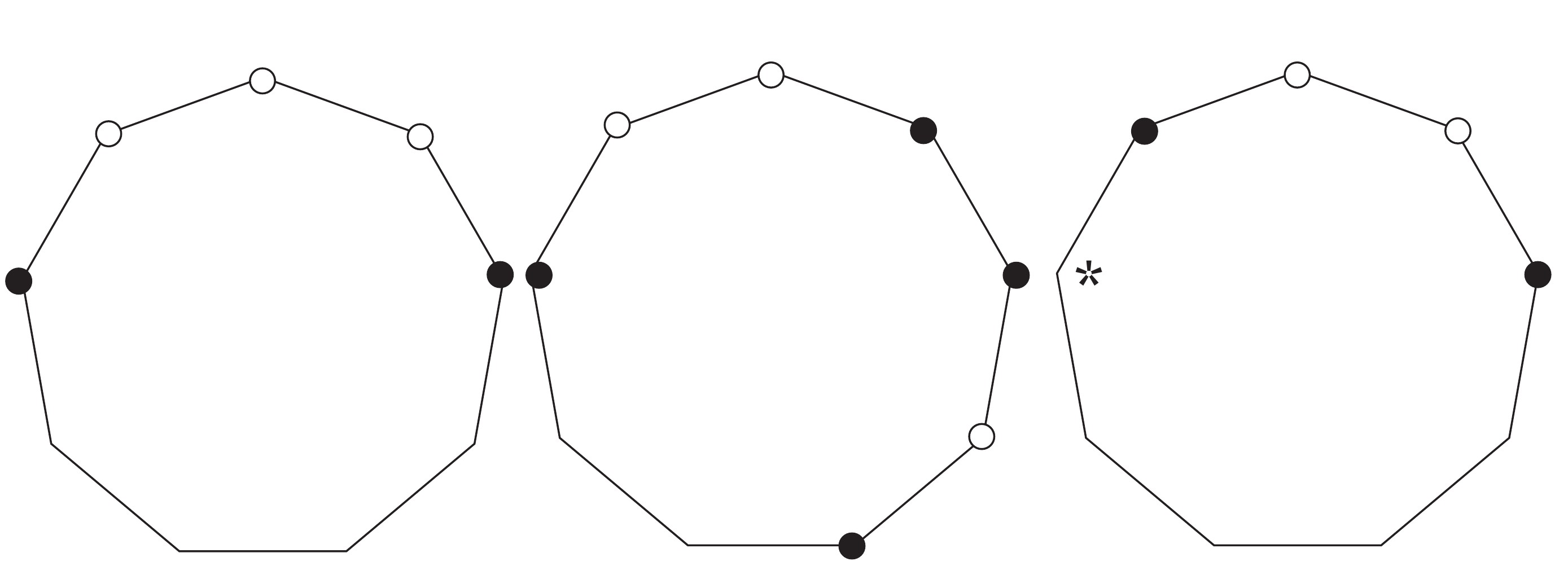}
\end{center} 
Repeating, with the indicated vertex:
\begin{center}
\includegraphics[width=3in]{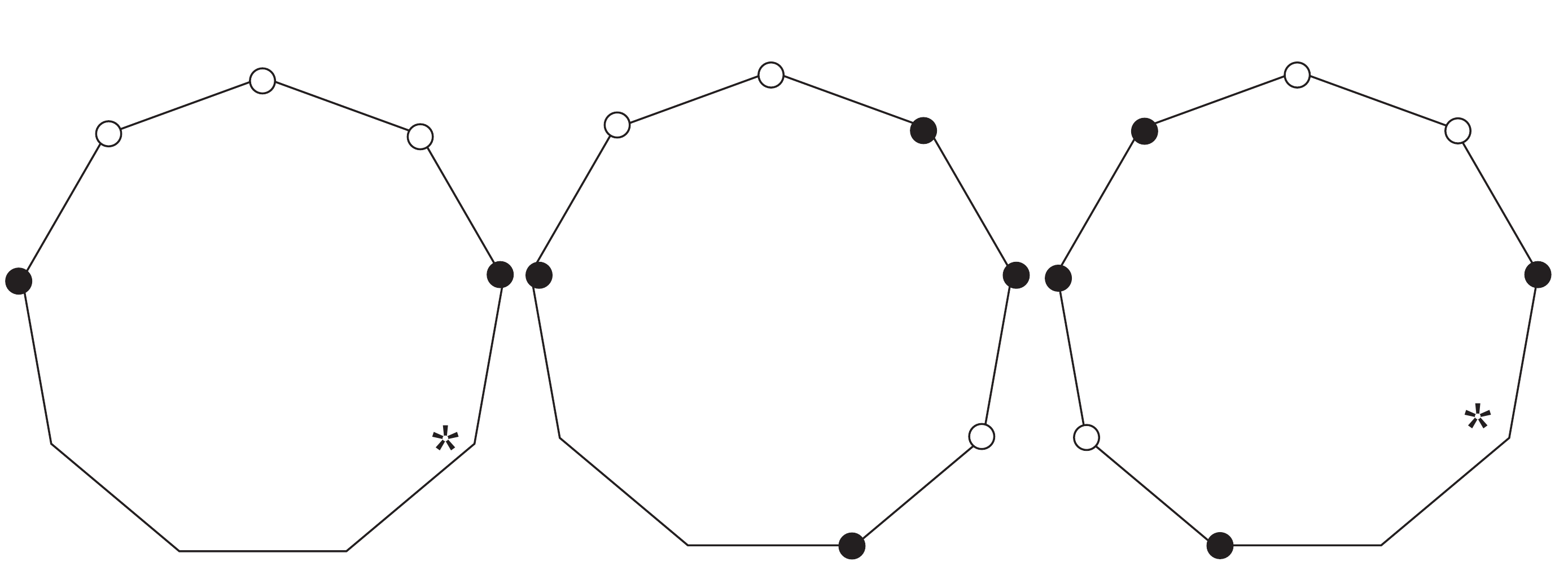}
\end{center} 
and then
\begin{center}
\includegraphics[width=3in]{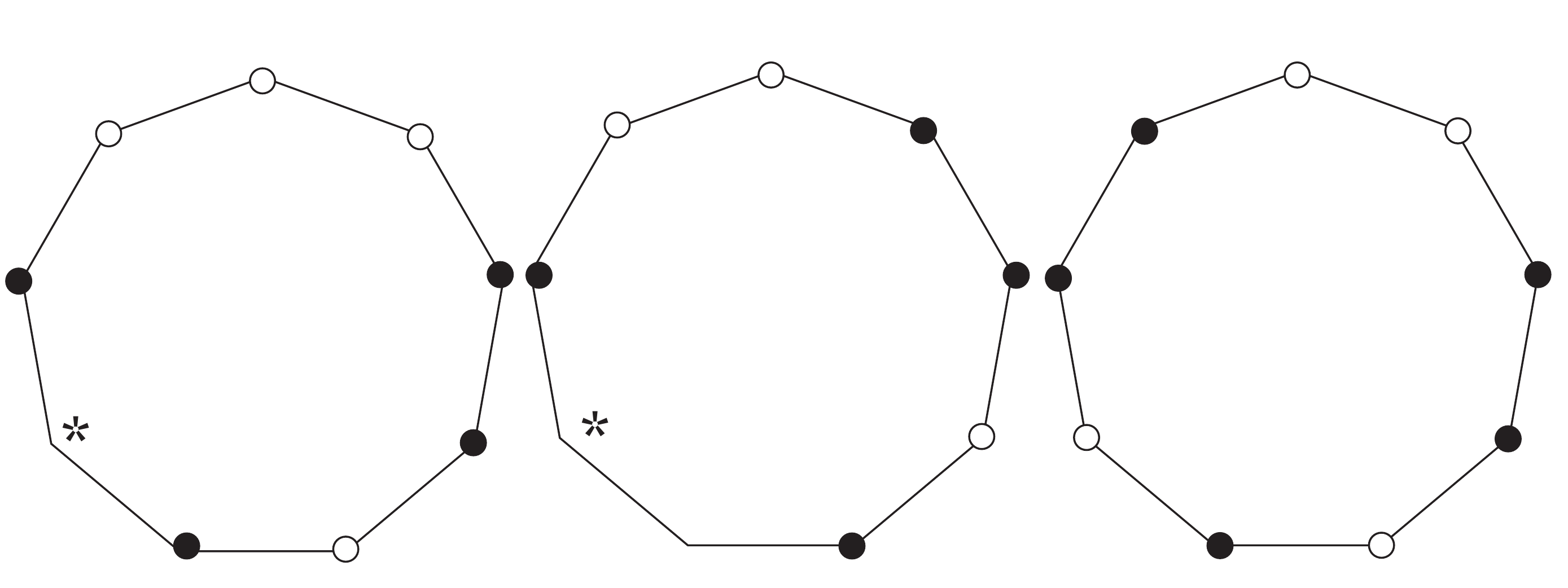}
\end{center} 
and
\begin{center}
\includegraphics[width=3in]{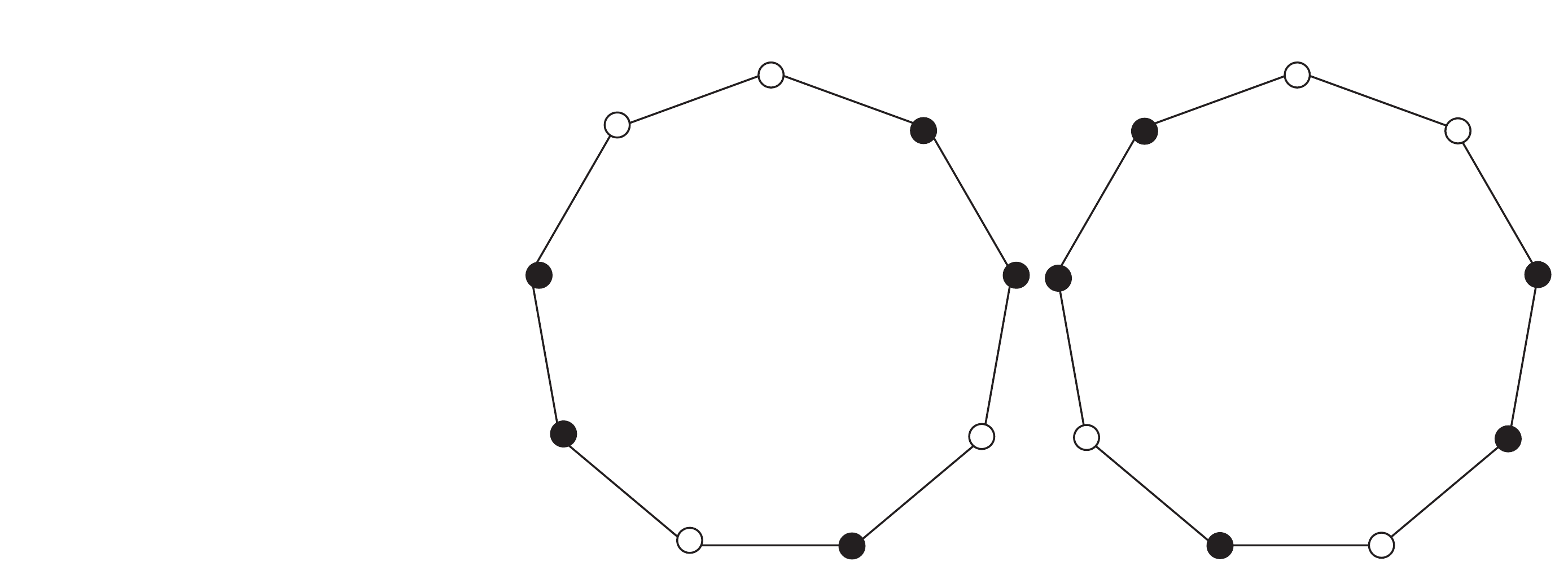}
\end{center} 
So we are left with two critical $4$-simplices, giving $\Theta(P_9)\simeq S^4\vee S^4$.

In general, we have $\Theta(P_n)\simeq \begin{cases}
S^{2k-2}\vee S^{2k-2} & n=3k\\
S^{2k-1} & n=3k+1\\
S^{2k-1} & n=3k+2
\end{cases}$
\end{example}

These examples exhibits a $3$-fold periodicity, and in fact,

\begin{proposition}\label{per}
Suppose a graph $\widetilde{G}$ is obtained from a graph $G$ by adding three interior vertices to an existing edge. Then $$\Theta(\widetilde{G})\simeq \Sigma^2 \Theta(G)$$
\end{proposition}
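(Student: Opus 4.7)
The plan is to apply the Mayer--Vietoris pushout to a cover of $\Theta(\widetilde G)$ by two closed stars. Label the three new interior vertices $v_1, v_2, v_3$ in order along the subdivided edge, so $\widetilde G$ has new edges $\{a,v_1\}, \{v_1,v_2\}, \{v_2,v_3\}, \{v_3,b\}$ in place of $\{a,b\}$.

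The first step is to verify that $\Theta(\widetilde G) = \mathrm{star}(v_1) \cup \mathrm{star}(v_3)$. Suppose a simplex $\sigma \in \Theta(\widetilde G)$ is in neither star. Then neither $\sigma \cup \{v_1\}$ nor $\sigma \cup \{v_3\}$ is a simplex, which means $\sigma$ must hit every edge of $\widetilde G$ not containing $v_1$ and every edge not containing $v_3$. Because no single edge of $\widetilde G$ contains both $v_1$ and $v_3$, these two families exhaust all edges, so $\sigma$ would hit every edge, making $\sigma$ a vertex cover of $\widetilde G$ and therefore not a simplex of $\Theta(\widetilde G)$---a contradiction.

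Both $\mathrm{star}(v_1)$ and $\mathrm{star}(v_3)$ are contractible, being cones on their respective links. By the standard pushout-of-contractibles fact (two contractibles glued along a common subspace $Y$ have the homotopy type of $\Sigma Y$), we obtain $\Theta(\widetilde G) \simeq \Sigma\bigl(\mathrm{star}(v_1) \cap \mathrm{star}(v_3)\bigr)$. The intersection consists of those simplices $\sigma$ for which both $\sigma \cup \{v_1\}$ and $\sigma \cup \{v_3\}$ are simplices, equivalently, $\sigma$ avoids some edge not containing $v_1$ and some (possibly different) edge not containing $v_3$.

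The crux---and the main obstacle---is then to establish the sub-claim $\mathrm{star}(v_1) \cap \mathrm{star}(v_3) \simeq \Sigma\Theta(G)$, which together with the previous step would give $\Theta(\widetilde G) \simeq \Sigma^2\Theta(G)$. I would prove this by iterating the analogous covering decomposition on the intersection itself, either by using the closed stars of the original endpoints $a, b$ (exploiting that $a$ is forced to be the unique $\widetilde G$-neighbor of $v_1$ inside the intersection, and similarly for $b, v_3$), or alternatively by constructing a sequential discrete vector field in the style of the interval calculations of Section~\ref{calcsection} and checking that the surviving critical cells reproduce the CW structure of $\Sigma\Theta(G)$. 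The auxiliary vertex $v_2$ together with the two pendant-like edges $\{a,v_1\}$ and $\{v_3,b\}$ contribute exactly the one extra suspension beyond $\Theta(G)$ needed to match.
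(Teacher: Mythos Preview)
Your first reduction is correct and pleasant: the closed stars of $v_1$ and $v_3$ do cover $\Theta(\widetilde G)$ for exactly the reason you give, and the homotopy pushout of two cones along their intersection is the suspension of that intersection. This is genuinely different from the paper's argument, which instead decomposes $\Theta(\widetilde G)$ according to which subset of $\{x_1,x_2,x_3\}$ a simplex contains, writes the result as a union of joins with subcomplexes of the big simplex $B\supset\Theta(G)$, and then performs explicit homotopies to reach $\Sigma(B\cup_{\Theta(G)}B')\simeq\Sigma^2\Theta(G)$.

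The problem is that your second step---the sub-claim $\mathrm{star}(v_1)\cap\mathrm{star}(v_3)\simeq\Sigma\Theta(G)$---is precisely where all the content lies, and neither of your proposed attacks works as stated. In particular, the closed stars of $a$ and $b$ \emph{do not} cover the intersection $X:=\mathrm{star}(v_1)\cap\mathrm{star}(v_3)$. The obstruction is the middle vertex $v_2$: a simplex $\sigma\in X$ containing $v_2$ together with a vertex $c\in V(G)$ adjacent in $G$ to both $a$ and $b$ will typically have $\sigma\cup\{a\}\notin\mathrm{star}(v_3)$ (adding $a$ kills the only available $v_3$-free edge $\{a,v_1\}$, and the edge $\{v_1,v_2\}$ is already hit by $v_2$) and symmetrically $\sigma\cup\{b\}\notin\mathrm{star}(v_1)$. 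For a concrete instance, take $G$ to be a triangle on $\{a,b,c\}$ and check that $\{v_2,c\}\in X$ lies in neither $\mathrm{star}_X(a)$ nor $\mathrm{star}_X(b)$. So the ``iterate with stars of $a,b$'' plan fails at the covering step, and the vector-field alternative you mention is only a hope, not an argument. To complete your route you would need a genuinely new idea for analysing $X$---for example, first collapsing away $v_1$ and $v_3$ (which do lie in $X$ once $G$ has more than one edge) and then finding the right pair of contractible pieces, or carrying out the join analysis that the paper performs directly on $\Theta(\widetilde G)$.
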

\begin{proof}
 One could construct vector fields on each of $\Theta(G)$ and $\Theta(\widetilde{G})$ which have a bijective correspondence between their critical simplices, such that the dimension of the $\widetilde{G}$ simplices is $2$ greater than the corresponding simplices for $G$ (excepting the unique $0$ simplex). While this could possibly be turned into a complete proof by analyzing the way the critical simplices attach to each other after crushing the simplices in the vector field, it is probably simpler to give a non-vector analysis in this case. 

Suppose the original edge has vertices $v,w$ and the subdivided edge has vertices $v$, $x_1$, $x_2$, $x_3$, and $w$ in that order. Let $B$ be the simplex spanned by the vertices of $G$. Then $\Theta(G)\subset B$. Let $\mathcal O_v\subset B$ be the subcomplex of simplices avoiding $v$ and let $\mathcal O_w$ be the subcomplex of simplices avoiding $w$. 
Then we have
\begin{align*}
\Theta(\widetilde{G})=&(\Theta(G)\setminus \operatorname{star}(\mathcal O_v\cap\mathcal O_w))*[x_1,x_2,x_3]\cup\\
&B*[x_1]\cup\\
&B*[x_3]\cup\\
&(\Theta(G)\cup\mathcal O_v\cup\mathcal O_w)*[x_2]\cup\\
&(\Theta(G)\cup\mathcal O_w)*[x_1,x_2]\cup\\
&(\Theta(G)\cup\mathcal O_v)*[x_2,x_3]
\end{align*}
This formula follows through a case analysis. If a simplex of $\Theta(\widetilde{G})$ contains both $v$ and $w$, but it avoids some edge in the original graph $G$, then the vertices $x_1,x_2,x_3$ can be freely added. This is the $(\Theta(G)\setminus \operatorname{star}(\mathcal O_v\cap\mathcal O_w))*[x_1,x_2,x_3]$ component above. (Recall $\operatorname{star}(\mathcal O_v\cap\mathcal O_w)$ is the union of the interiors of all simplices that have a face in the subcomplex $\mathcal O_v\cap\mathcal O_w$.) If, on the other hand, the simplex contains $v$ and $w$ but hits every edge of $G$, then it must omit $x_1$ or $x_3$, putting us in the second two terms of the above union. If it omits both $v$ and $w$, we can add $x_2$ freely, which is the fourth case. If it omits just $w$ then we can freely add $x_1$ and $x_2$ and still omit the edge from $x_3$ to $w$, giving us the penultimate term in the above union. The last term corresponds to just omitting $v$ and not $w$.

Note that each $y\in \Theta(G)$ is joined with one of the following contractible subsets of $[x_1,x_2,x_3]$:  
$$
[x_1,x_2,x_3], [x_1,x_2]\cup[x_2,x_3],[x_1,x_2],[x_2,x_3], \{x_2\}
$$
On the other hand each $y\in B\setminus \Theta(G)$ is joined to two distinct contractible subsets.
More specifically, each point in $B\setminus (\Theta(G)\cup\mathcal O_v\cup\mathcal O_w)$ is joined to $\{x_1,x_3\}$, each point in 
$\mathcal O_v\setminus \Theta(G)$ is joined to $[x_2,x_3]\cup\{x_1\}$ and each point in $\mathcal O_w\setminus \Theta(G)$ is joined to $[x_1,x_2]\cup\{x_3\}$.
Thus, if we shrink $[x_1,x_2,x_3]$ to a point $[x]$, this can be modeled by 
joining each point of $\Theta(G)$ to $[x]$ by a single line, and joining the points in $B\setminus \Theta(G)$ to $x$ by two lines, topologized so that these two lines get identified when you move to the subcomplex $\Theta(G)$. Now contract $B$. This yields a cell complex similar to the suspension of $B$: $B*\{\alpha,\beta\}$ except that the lines connecting $B$ to the two extra vertices are doubled away from $\Theta(G)$. 
These two lines can be regarded as coming from two separate copies of $B$ (called $B$ and $B'$), glued along $\Theta(G)$
so that $\Theta(\widetilde{G})\simeq \Sigma (B\cup_{\Theta(G)} B')$. (One copy of $B$ yields one set of lines, and the other copy $B'$ yields the second set.)
As in the proof of Proposition \ref{whichone}, we may assume that $B=C(\Theta(G))$, so that $B\cup_{\Theta(G)} B'=\Sigma(\Theta(G))$.
\end{proof}

Proposition \ref{per} is the exception rather than the rule when it comes to graph operations. Most simple graph operations do not have well-defined effects on the homotopy type of the theta complex. Indeed subdividing an edge by adding a single vertex will have wildly unpredictable effects on the homotopy type, as will connecting disjoint graphs by an edge.

Finally we move on to a cubic example. We use the notation $\vee_k X$ to denote a $k$-fold wedge of copies of $X$, which is to say $k$ copies of $X$ identified at a point.

\begin{example}\label{cubeexample}
We calculate $\Theta(\operatorname{Cube}(3,1))\simeq \vee_3 S^4$ using a sequential vector field. The leftover simplices after the first step will only omit edges incident to the first vertex. These can be sorted into three cases as follows, where the first vertex is the one in the lower left-hand corner.
\begin{center}
\includegraphics[width=3in]{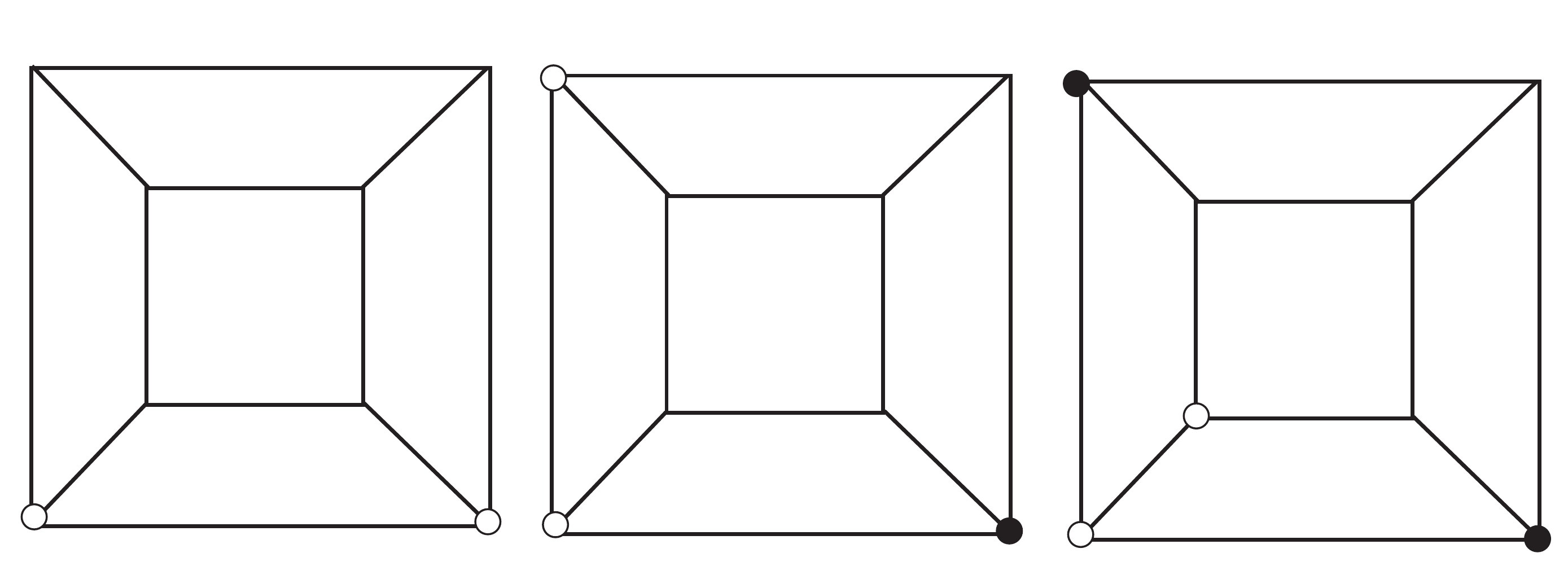}
\end{center} 
This forces some vertices to be in these critical simplices:
\begin{center}
\includegraphics[width=3in]{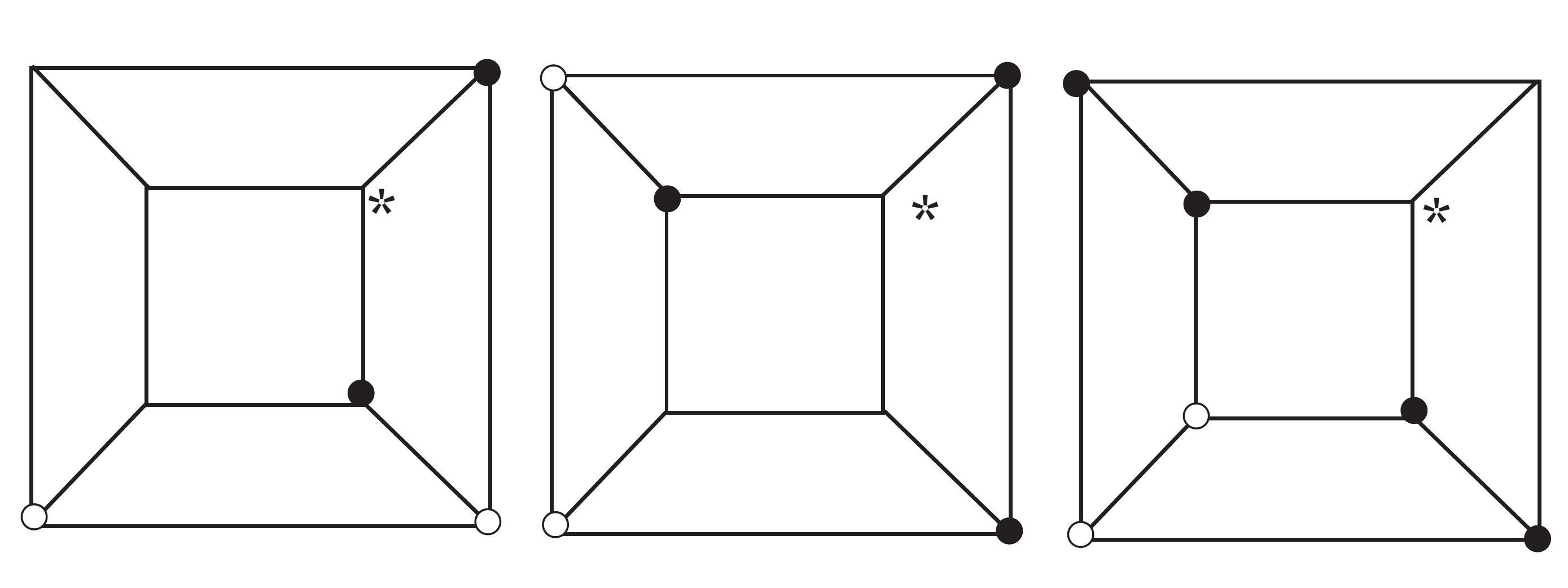}
\end{center} 
Continuing with the starred vertex, we get
\begin{center}
\includegraphics[width=3in]{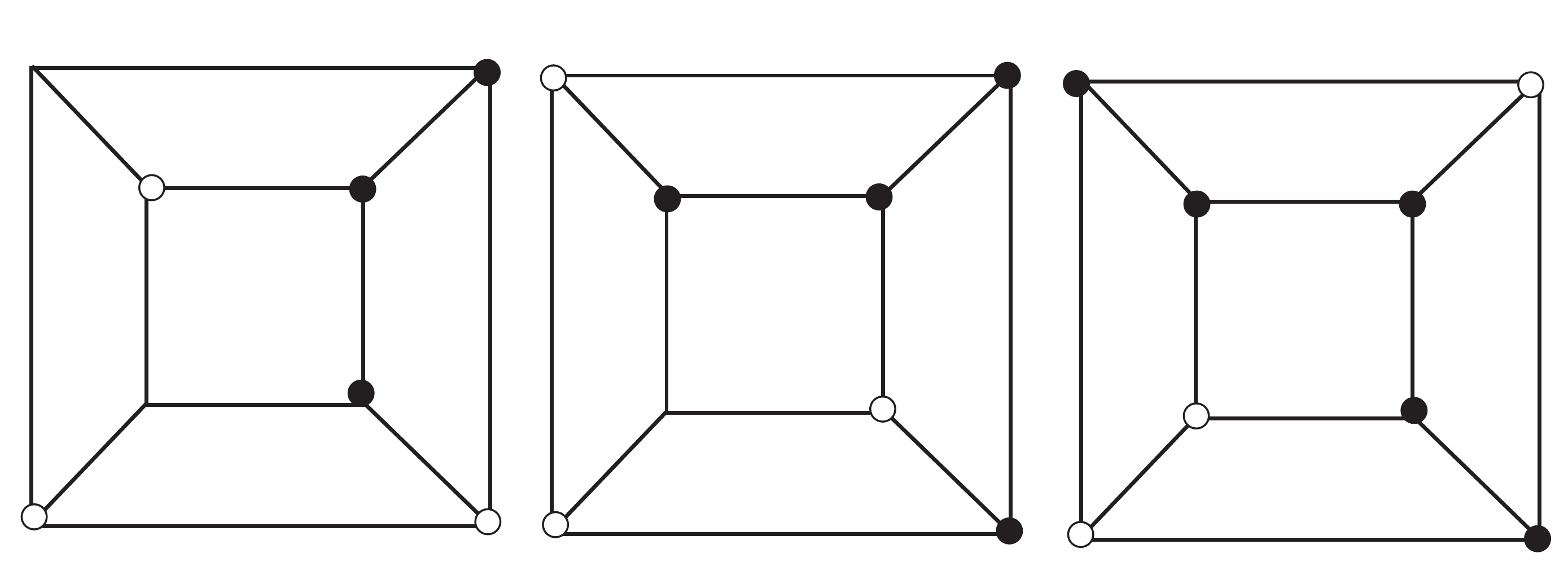}
\end{center}
This forces the final vertices to be filled in:
\begin{center}
\includegraphics[width=3in]{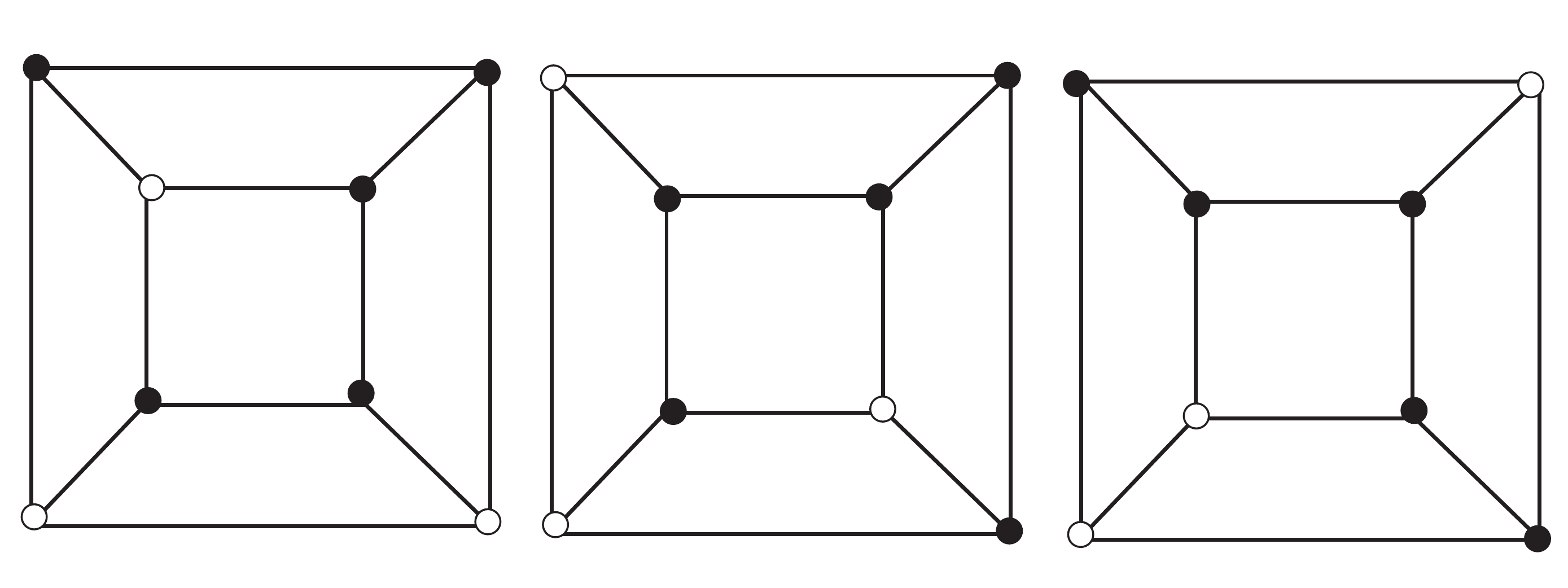}
\end{center}
Thus we get three critical $4$-simplices, implying $\Theta(\operatorname{Cube}(3,1))\simeq \vee_3 S^4$, as claimed.
\end{example}

\subsection{Cubes}\label{cubesec}
In this section, we collect some results about $\Theta(\operatorname{Cube}(n,k))$. Two cases are easy:

\begin{proposition}
The following statements are true.
\begin{enumerate}
\item $\Theta(\operatorname{Cube}(n,n-1))\simeq S^{n-1}$
\item $\Theta(\operatorname{Cube}(n,0))\cong S^{2^n-2}$
\end{enumerate}
\end{proposition}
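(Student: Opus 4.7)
The plan is to handle the two parts essentially independently, since each has a very different character.

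For part (2), I would proceed directly from the definition. The hyperedges of $\operatorname{Cube}(n,0)$ are exactly the $2^n$ singleton vertex sets, one for each vertex of the $n$-cube. A set of vertices is a simplex of $\Theta$ precisely when it is contained in the complement of some singleton, i.e.\ when it omits at least one vertex. Hence $\Theta(\operatorname{Cube}(n,0))$ is the simplicial complex of \emph{all} proper subsets of a $2^n$-element set, which is the boundary of the $(2^n-1)$-simplex, and thus homeomorphic (in fact combinatorially isomorphic as simplicial complexes) to $S^{2^n-2}$. This gives the ``$\cong$'' in the statement.

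For part (1), the cleanest route is via the duality $\Theta(\mathcal H)\simeq \Theta(\mathcal H^*)$ already proved in the excerpt. First I would describe $\operatorname{Cube}(n,n-1)^*$ explicitly. The $(n-1)$-faces of the $n$-cube come in $n$ complementary pairs $\{F_i^0,F_i^1\}$, where $F_i^\epsilon$ is the face with $i$-th coordinate fixed to $\epsilon$. Thus the dual hypergraph has $2n$ vertices naturally partitioned into $n$ antipodal pairs, with one hyperedge for each cube vertex $v=(\epsilon_1,\dots,\epsilon_n)$, consisting of the $n$ faces $\{F_i^{\epsilon_i}\}$ containing $v$, i.e.\ one vertex chosen from each pair.

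The next step is to recognize $\Theta(\operatorname{Cube}(n,n-1)^*)$. A set $S$ of dual vertices is a simplex iff it is contained in the complement of some hyperedge $\{F_i^{\epsilon_i}\}_{i=1}^n$. Since such a complement is obtained by choosing, from each pair, the element \emph{not} in the hyperedge, $S$ is a simplex iff for each $i$, at least one of $F_i^0,F_i^1$ lies outside $S$, i.e.\ iff $S$ contains no full antipodal pair. This is exactly the description of the $n$-fold join $S^0*S^0*\cdots*S^0$, which is homeomorphic to $S^{n-1}$. Combining with the duality theorem gives $\Theta(\operatorname{Cube}(n,n-1))\simeq S^{n-1}$.

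Neither step presents much of a genuine obstacle; the only subtlety is keeping the logical quantifiers straight in part (1), when translating ``$S$ avoids some hyperedge'' into ``$S$ contains no antipodal pair,'' which amounts to observing that the complement of a transversal to the pairs is again a transversal.
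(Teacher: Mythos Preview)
Your proof is correct. Part~(2) is essentially identical to the paper's argument.

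For part~(1), however, you take a genuinely different route from the paper. The paper argues directly: since the complement of an $(n-1)$-face is the opposite $(n-1)$-face, the maximal simplices of $\Theta(\operatorname{Cube}(n,n-1))$ are themselves the $(n-1)$-faces, and the nerve of this cover of $\Theta$ coincides with the nerve of the good cover of $\partial I^n\cong S^{n-1}$ by its facets; the nerve theorem then gives the homotopy equivalence. You instead invoke the duality $\Theta(\mathcal H)\simeq\Theta(\mathcal H^*)$ and identify $\Theta(\operatorname{Cube}(n,n-1)^*)$ explicitly as the boundary of the $n$-dimensional cross-polytope $S^0*\cdots*S^0\cong S^{n-1}$. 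The two approaches are closely related---indeed the nerve that appears implicitly in the paper's argument \emph{is} the cross-polytope boundary you describe---but your presentation has the minor advantage of producing an explicit simplicial model (the octahedral sphere) rather than appealing to a comparison of nerves. The paper's argument, on the other hand, is more self-contained in that it does not rely on the duality theorem.
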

\begin{proof}
Note that the complements of hyperedges of $\operatorname{Cube}(n,n-1)$ are also hyperedges
 and that neighborhoods of the codimension $1$ faces of a cube form a good cover of the cube's boundary $S^{n-1}$. Clearly, the cover of $\Theta(\operatorname{Cube}(n,n-1))$ by top dimensional simplices has the same nerve as this good cover.  So by the nerve theorem, $\Theta(\operatorname{Cube}(n,n-1))\simeq S^{n-1}$.
 
$\Theta(\operatorname{Cube}(n,0))$ consists of all proper subsets of the vertex set of the $n$-cube. This is the boundary of a simplex with $2^{n}$ vertices, which is a sphere of dimension $2^n-2.$
\end{proof}

We now present the results of computer calculations, both of the homotopy type of $\Theta(\operatorname{Cube}(n,k))$, and also of the reduced Euler characteristic, which we were able to determine for a slightly larger class of examples.

\begin{theorem}
\label{cube1}
The homotopy types of some examples of $\Theta(\operatorname{Cube}(n,k))$ are given in the following chart.
$$\begin{array}{c|cccccc}
n&k=0&k=1&k=2&k=3&k=4\\
\hline
1&S^0&&&&\\
2&S^2&S^1&&&\\
3&S^6&\vee_3S^4&S^2&&\\
4&S^{14}&\vee_7 S^{10}&\vee_{15}S^6&S^3&\\
5& S^{30}&?&?&\vee_{105}S^8&S^4
\end{array}
$$
In addition, both $\Theta(\operatorname{Cube}(5,1))$ and $\Theta(\operatorname{Cube}(5,2))$ are not homotopy equivalent to wedges of same-dimensional spheres.
\begin{enumerate}
\item The rational homology of $\Theta(\operatorname{Cube}(5,1))$ is trivial except  in the following cases:\\
 $H_0 \cong\mathbb Q$, $H_{22}\cong\mathbb Q^{10}$, $H_{24}\cong \mathbb Q$
 \item The rational homology of $\Theta(\operatorname{Cube}(5,2))$ is trivial except in the following cases:\\ $H_0 \cong \mathbb Q$, $H_{14} \cong \mathbb Q^{60}$, $ H_{15} \cong $ ?, $ H_{16} \cong $ ?, $H_{17} \cong $ ?, $H_{18} \cong \mathbb Q^{16} $. The question marked groups may or may not be trivial.
 \end{enumerate}
\end{theorem}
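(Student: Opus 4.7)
The theorem is essentially a computer-assisted computation built on top of the discrete Morse theory of Section~\ref{vecsection}. My overall strategy is: for each entry $(n,k)$ of the chart, construct a sequential vector field on $\Theta(\operatorname{Cube}(n,k))$, enumerate the critical simplices, and then either read off the homotopy type from Theorem~\ref{stuff} directly (when the critical cells land in only two dimensions) or pass to the Morse chain complex and compute its rational homology.

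First I would dispose of the easy entries. The rows $k=0$ and the diagonal $k=n-1$ are handled by the proposition preceding the theorem. The $(3,1)$ entry is Example~\ref{cubeexample}. For the remaining chart entries with $1\leq k\leq n-2$, the plan is:
\begin{enumerate}
\item Label the vertices of the $n$-cube by $\{0,1\}^n$ and enumerate the $k$-faces as hyperedges.
\item Choose an ordering $v_1,\ldots,v_{2^n}$ of the vertices (trying the lexicographic, Gray-code, and recursive ``bisect the cube'' orderings) and form the sequential vector field $\mathbf D_{v_1,\ldots,v_{2^n}}$ on $\Theta(\operatorname{Cube}(n,k))$.
\item Collect the critical simplices and record the Morse incidence numbers between consecutive critical dimensions.
\end{enumerate}
For the chart entries $(3,2)$, $(4,1)$, $(4,2)$, $(4,3)$, $(5,3)$, $(5,4)$, the aim is to exhibit (via computer search over vertex orderings) a $\mathbf D_{v_1,\ldots,v_{2^n}}$ whose only critical cells are the singleton $\{v_1\}$ and some number of top-dimensional simplices. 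Whenever this happens, Theorem~\ref{stuff} immediately yields a wedge of equidimensional spheres, and the multiplicity can be cross-checked against the reduced Euler characteristic computed directly from the face numbers of $\Theta(\operatorname{Cube}(n,k))$.

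For the two hard entries $\Theta(\operatorname{Cube}(5,1))$ and $\Theta(\operatorname{Cube}(5,2))$, the expectation is that no sequential ordering achieves the Morse inequalities with equality. The plan is then to compute the Morse chain complex arising from the best ordering found, and feed the resulting (much smaller) boundary matrices into a Smith normal form routine over $\mathbb Q$ to read off the ranks of the homology groups. The claim that neither space is a wedge of equidimensional spheres follows as soon as the rational homology is shown to be concentrated in more than one positive degree; the explicit dimensions $H_{22}\cong \mathbb Q^{10}$, $H_{24}\cong\mathbb Q$ for $(5,1)$, and $H_{14}\cong\mathbb Q^{60}$, $H_{18}\cong\mathbb Q^{16}$ for $(5,2)$, are what the computer outputs.

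The main obstacle is sheer size. The complex $\Theta(\operatorname{Cube}(5,k))$ has $32$ vertices and up to $2^{32}-2^{30}$ simplices, so naive enumeration of the full chain complex is infeasible. The discrete vector field is indispensable here precisely because it collapses away the bulk of the simplices before any linear algebra is attempted, but even the Morse complex for $(5,1)$ and $(5,2)$ is large enough that Smith normal form must be run with care (rational Gauss elimination with pivoting, or an equivariant decomposition exploiting the hyperoctahedral symmetry group $B_5$ acting on the cube hypergraph). The question marks in the middle homology of $\Theta(\operatorname{Cube}(5,2))$ are exactly the degrees in which this computation fails to complete; the flanking groups $H_{14}$ and $H_{18}$ are stable enough to be read off unambiguously, which is all that is needed for the ``not a wedge of equidimensional spheres'' conclusion.
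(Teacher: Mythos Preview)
Your proposal is correct and matches the paper's approach: the paper presents Theorem~\ref{cube1} explicitly as ``the results of computer calculations'' with no formal proof, using exactly the discrete Morse/simple-homotopy reduction machinery of Section~\ref{vecsection} (cf.\ the remark after Proposition~\ref{blah} that ``the computer program performed as many simple-homotopy reductions as it could find''). The easy boundary entries are handled by the proposition on $\operatorname{Cube}(n,n-1)$ and $\operatorname{Cube}(n,0)$ and by Example~\ref{cubeexample}, just as you say, and the remaining entries---including the partial rational homology of the $(5,1)$ and $(5,2)$ cases---are computer output in both your plan and the paper.
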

{\bf Note:} The integer homology groups of these examples are currently unknown.

\begin{theorem}
\label{cube2}
The reduced Euler characteristics of some examples of $\Theta(\operatorname{Cube}(n,k))$ are given in the following chart.
$$\begin{array}{c|ccccccc}
n&k=0&k=1&k=2&k=3&k=4&k=5&k=6\\
\hline
1&1&&&&&&\\
2&1&-1&&&&&\\
3&1&3&1&&&&\\
4&1&7&15&-1&&&\\
5&1&11&57&105&1&&\\
6&1&143&&&&-1&\\
7&1&7715&&&&&1\\
\end{array}
$$
\end{theorem}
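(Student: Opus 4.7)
The table is a list of computed reduced Euler characteristics, so the proof is essentially computational. The entries for $k=0$ and $k=n-1$ are immediate from the preceding proposition (they record $\tilde\chi(S^{2^n-2})$ and $\tilde\chi(S^{n-1})$), and the entries where Theorem~\ref{cube1} supplies the full rational homology of $\Theta(\operatorname{Cube}(n,k))$ reduce to an alternating sum of dimensions. What remains are a handful of hard entries, notably $n=6, k=1$ giving $143$ and $n=7, k=1$ giving $7715$.

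My plan is to first derive a compact formula for $\tilde\chi$. Starting from the good cover of $\Theta(\mathcal H)$ by the subcomplexes spanned by $V\setminus h$, $h\in H$, and applying standard inclusion-exclusion, one obtains for any finite hypergraph $\mathcal H=(V,H)$
\[
\tilde\chi(\Theta(\mathcal H)) \;=\; \sum_{\substack{S\subseteq H \\ \bigcup_{h\in S}h=V}}(-1)^{|S|},
\]
so the reduced Euler characteristic is a signed count of hyperedge covers of the vertex set. (One can sanity-check this on $\operatorname{Cube}(2,1)$: the only covers of the $4$ square vertices by edges are the two pairs of parallel edges, the four $3$-subsets, and the full set, contributing $+2-4+1=-1$.) For $\mathcal H=\operatorname{Cube}(n,k)$ this specialises to a signed count of coverings of the $2^n$ cube vertices by $k$-faces, and for $n\leq 5$ it can be evaluated by direct enumeration over subsets of the hyperedge set (or equivalently by enumerating subsets of $V$ and checking the $\Theta$-condition pointwise).

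For $n=6,7$ the hyperedge set is too large for brute force ($|H|=192$ for $\operatorname{Cube}(6,1)$ and $|H|=448$ for $\operatorname{Cube}(7,1)$), so the plan is to exploit the action of the hyperoctahedral group $B_n=(\mathbb Z/2)^n\rtimes S_n$, which permutes the $k$-faces and preserves the covering condition. The sum collapses to one representative per $B_n$-orbit of subsets of $H$, weighted by orbit size, giving a reduction by a factor of order $|B_n|=2^n\,n!$. The main obstacle is practical: canonical enumeration of $B_n$-orbits of hyperedge subsets without duplication, and memory management at $n=7$. As an independent sanity check one can compute $\tilde\chi$ a second way from the alternating count of critical cells of a sequential discrete vector field on $\Theta(\operatorname{Cube}(n,k))$ (Section~\ref{vecsection}), and confirm agreement on those entries where both methods are feasible.
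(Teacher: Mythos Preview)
The paper gives no proof of this theorem: the table is simply reported as the output of computer calculations, with no method described, so there is nothing to compare against at the level of ideas.  Your cover--counting identity
\[
\tilde\chi(\Theta(\mathcal H))\;=\;\sum_{\substack{S\subseteq H\\ \bigcup_{h\in S}h=V}}(-1)^{|S|}
\]
is correct, your $\operatorname{Cube}(2,1)$ sanity check is fine, and for all entries with $n\le 5$ (together with the $k=0$ and $k=n-1$ columns via the preceding Proposition) your plan is adequate.

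There is, however, a genuine feasibility gap for the two hardest entries.  Orbit enumeration under $B_n$ reduces a sum over $2^{|H|}$ hyperedge subsets by only a factor of about $|B_n|=2^n\,n!$; for $\operatorname{Cube}(7,1)$ this still leaves roughly $2^{448}/(2^7\cdot 7!)>10^{128}$ orbits, so ``canonical enumeration of $B_n$-orbits of hyperedge subsets'' cannot produce the number $7715$ in any realistic sense, and the same objection already bites at $n=6$.  Enumerating orbits of \emph{vertex} subsets is equally hopeless, since $2^{2^7}/|B_7|$ is of the same order.  A reformulation that scales far better for the $k=1$ column is to pass to complements: $\sigma$ is a simplex of $\Theta(\operatorname{Cube}(n,1))$ iff $V\setminus\sigma$ contains an edge of the hypercube graph $Q_n$, and a short computation (or Alexander duality with the independence complex) yields
\[
\tilde\chi\bigl(\Theta(\operatorname{Cube}(n,1))\bigr)\;=\;I(Q_n;-1),
\]
the independence polynomial of $Q_n$ evaluated at $-1$.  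This can be attacked by the vertex--deletion recursion $I(G;x)=I(G\setminus v;x)+x\,I(G\setminus N[v];x)$ or by transfer--matrix methods exploiting $Q_n=Q_{n-1}\,\square\,K_2$.  In the spirit of Section~\ref{vecsection}, your ``sanity check'' via a sequential discrete vector field should really be promoted to the primary method here rather than an afterthought: a well--chosen sequential field collapses $\Theta(\operatorname{Cube}(n,1))$ to a small set of critical cells whose alternating count is $\tilde\chi$, and this is a far more plausible route to the entries $143$ and $7715$ than the orbit sum you propose.
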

The fact that the Euler characteristics are always even is proven in the section~\ref{groupsection}.

The following conjecture is consistent with the known data and with mod $p$ Euler characteristic calculations, as we will see in a later section (Theorem 9). Note that $n!!=n(n-2)(n-4)\cdots 1$, when $n$ is odd.

\begin{conjecture}
\label{factorialconjecture}
$\Theta(\operatorname{Cube}(n,n-2))\simeq \vee_{(2n-3)!!} S^{2n-2}$
\end{conjecture}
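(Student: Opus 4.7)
The plan is to attack this conjecture using the discrete vector field machinery of Section~\ref{vecsection}. The goal is to construct an explicit sequential gradient vector field $\mathbf{D}_{v_1,\ldots,v_{2^n}}$ on $\Theta(\operatorname{Cube}(n,n-2))$ whose critical simplices consist of a single vertex and exactly $(2n-3)!!$ cells of dimension $2n-2$, with no critical cells in any intermediate dimension. Since $(2n-3)!!$ is the number of perfect matchings on a set of $2n-2$ points, one naturally hopes for a combinatorial bijection between the critical cells of top dimension and such matchings; Example~\ref{cubeexample} already confirms this for $n=3$, where the three critical $4$-simplices correspond to the three matchings of a $4$-element set.

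I would begin by ordering the vertices of the $n$-cube by Hamming weight, breaking ties lexicographically, with the origin $\mathbf{0}=(0,\ldots,0)$ first. After the first pass of the sequential construction, the unpaired simplices are $\{\mathbf{0}\}$ together with those containing $\mathbf{0}$ that avoid at least one of the $\binom{n}{2}$ codimension-$2$ faces through $\mathbf{0}$. As in the interval, polygon, and $3$-cube calculations of Section~\ref{calcsection}, this forces additional obligatory vertices into each surviving critical simplex. The central combinatorial task is to analyze, pass by pass, how each subsequent vertex in the ordering either kills off such configurations by pairing or else further refines them, and to prove that what remains at the end is precisely a family of $(2n-2)$-dimensional simplices canonically indexed by the perfect matchings on a distinguished $(2n-2)$-element set of half-facets of the cube.

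Granted this identification, the conclusion is immediate: Theorem~\ref{stuff} produces a CW structure on $\Theta(\operatorname{Cube}(n,n-2))$ with one $0$-cell, $(2n-3)!!$ cells in dimension $2n-2$, and nothing in between. Since the $(2n-3)$-skeleton of such a complex is a point, all attaching maps of the top cells are automatically null-homotopic, so the complex is a wedge of $(2n-3)!!$ copies of $S^{2n-2}$. The principal obstacle is the combinatorial heart of the argument: controlling the sequential vector field precisely enough to see the perfect-matching structure emerge. The pairings produced by the construction interact nontrivially with the geometry of $(n-2)$-faces of the cube, and the number of face-incidence cases that need to be tracked grows rapidly with $n$. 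This is the step that has so far resisted attack, despite the strong corroborating evidence provided by the mod~$p$ Euler characteristic calculations of Section~\ref{groupsection}.
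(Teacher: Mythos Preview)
This statement is a \emph{conjecture} in the paper, not a theorem; the paper contains no proof of it. The authors explicitly remark that ``the proof of this has been so elusive'' and only verify the conjecture at the level of Euler characteristic modulo~$p$ (Theorem~\ref{groupcube}). So there is no proof in the paper against which to compare your proposal.

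Your proposal is likewise not a proof: it is a plan of attack via a sequential vector field, together with an honest acknowledgment that the central combinatorial step---showing that the surviving critical simplices are exactly $(2n-3)!!$ cells of dimension $2n-2$ indexed by perfect matchings---``has so far resisted attack.'' That is an accurate assessment of the situation, and the strategy you outline is indeed the natural one suggested by the paper's toolkit and by Example~\ref{cubeexample}. But as written it is a sketch with the key lemma missing, not a proof. The gap is precisely the one you name: no argument is given that the sequential field terminates with critical cells concentrated in dimensions $0$ and $2n-2$, nor that the top cells are counted by matchings. Until that bijection is actually established, the final paragraph (invoking Theorem~\ref{stuff} and the null-homotopy of attaching maps) is vacuous.
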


Indeed, bearing in mind the goal of distinguishing $2$-SAT from $k$-SAT, $k\geq 3$, the following conjecture could prove very useful.

\begin{conjecture}\label{spread}
$\Theta(\operatorname{Cube}(n,k))$ is not homotopy equivalent to a wedge of same-dimensional spheres for $0< k<n-2$ for $n$ sufficiently large. Indeed, the nontrivial homology groups span an increasing range of dimensions as $n$ increases.
\end{conjecture}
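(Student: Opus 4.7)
The plan is to attack Conjecture \ref{spread} via Alexander duality: writing $N=2^n$ for the number of vertices of the $n$-cube, the isomorphism
$$\widetilde{H}_i(\Theta(\operatorname{Cube}(n,k)))\cong \widetilde{H}^{N-i-3}(I(\operatorname{Cube}(n,k)))$$
converts the question into one about the spread of nontrivial cohomology of the independence complex of the hypergraph of $k$-faces. The task is then to exhibit two dimensions $d_{\text{lo}}(n,k)<d_{\text{hi}}(n,k)$ in which $\widetilde{H}^{d_i}(I(\operatorname{Cube}(n,k)))\neq 0$ and show that $d_{\text{hi}}-d_{\text{lo}}\to\infty$ as $n\to\infty$ with $k$ fixed (or, more generally, with $0<k<n-2$).

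For the high-cohomology end of $I$ (equivalently, low-homology end of $\Theta$), I would try to lift an existing connectivity estimate. The Engstr\"om bound referenced in \cite{e1}, together with the fact that large independent subsets of the cube exist coming from ``balanced'' two-colorings (e.g.\ coloring by parity of coordinate sum gives an independent set of size $2^{n-1}$ once $k\geq 1$), should sandwich the highest nontrivial cohomological dimension of $I(\operatorname{Cube}(n,k))$ between two values that both grow with $n$. The concrete goal is to check that the top homology actually appears at some dimension comparable to the dimension of $I$ itself, rather than being wiped out by coboundaries; this I would test by choosing a specific facet and computing the local contribution.

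For the low-cohomology end of $I$ (top-homology end of $\Theta$), I would look inside a fixed small subconfiguration, say the vertex set of a single $(k+2)$-face $F$. Its restriction is a hypergraph whose independence complex I can describe explicitly, and by a pair/Mayer--Vietoris argument comparing $(I(\operatorname{Cube}(n,k)), I(\operatorname{Cube}(n,k)\setminus F))$, I would try to transport a nontrivial low-degree class coming from $F$ to the full complex. Since the ``small'' source of cohomology lives in a bounded range independent of $n$, while the ``large'' source grows with $n$, this would give the desired unbounded gap.

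The main obstacle is showing that the constructed classes in both ranges actually survive in homology rather than being killed by cancellations --- this is precisely what the $n=5$ computations hint at but do not prove in general. The Mayer--Vietoris step is delicate because the intersection hypergraph of $k$-faces straddling the two halves of an $(n{+}1)$-cube is complicated, and any naive inductive claim along $n\mapsto n+1$ must contend with classes potentially coinciding or canceling. A backup approach is to build explicit discrete vector fields as in Section \ref{vecsection} with critical simplices prescribed in two well-separated dimensions; the difficulty there is designing gradient pairings that control the positions of critical cells, not merely their total Euler-characteristic count, since Theorem \ref{cube2} already shows that many distinct critical-simplex dimensions must combine to produce the observed Euler characteristics. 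If neither route closes the argument, a weaker intermediate result --- e.g.\ showing that the spread is at least $2$ for all $n\geq 5$ and $0<k<n-2$ --- would already settle the non-wedge portion of the conjecture.
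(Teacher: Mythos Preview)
The statement you are addressing is labeled \emph{Conjecture} in the paper, and the paper gives no proof of it whatsoever. The only evidence the authors provide is the computer calculation in Theorem~\ref{cube1} showing that $\Theta(\operatorname{Cube}(5,1))$ and $\Theta(\operatorname{Cube}(5,2))$ have rational homology in more than one positive degree. So there is nothing on the paper's side to compare your proposal to; what you have written is a research plan for an open problem, and you yourself acknowledge this when you discuss ``obstacles'' and fallback positions.

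As a plan it is reasonable in outline, but two points deserve flagging. First, the Engstr\"om connectivity bound you invoke is stated and applied in the paper only for \emph{graphs} (the $k=1$ case); for $k\geq 2$ you would need a hypergraph analogue, which you would have to supply or cite separately. Second, and more seriously, the entire difficulty of the conjecture is exactly the step you identify as ``the main obstacle'': producing classes that \emph{survive}. Exhibiting a large independent set (your parity coloring) shows only that $I(\operatorname{Cube}(n,k))$ has high dimension, not that it has nonvanishing cohomology near that dimension; and restricting to a single $(k{+}2)$-face gives a local class whose image under the connecting map of the pair may well vanish. Neither of your two mechanisms comes with any argument for nontriviality, so at present the proposal is a list of where one might look rather than a proof sketch. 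The ``weaker intermediate result'' you mention at the end (spread $\geq 2$ for all $n\geq 5$) would already be new and is not established in the paper either.
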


If these conjectures are true, they would show a dramatic difference in the homotopy types of 
$|2\text{-}SAT\text{-}n|\simeq  \Theta(\operatorname{Cube}(n,n-2))$ and $|k\text{-}SAT\text{-}n|$ for $k\geq 3$. 

%We also take a stab at the general form of $\Theta(\operatorname{Cube}(n,k))$.

%\begin{conjecture}\label{dimconj}
%\label{sphereconjecture}
%$\Theta(\operatorname{Cube}(n,k))$ is homotopy equivalent to a wedge of spheres of 
%dimension $2^{n-k}+k\cdot2^{n-k-1}-2$.
%\end{conjecture}

\subsection{Other complexes}
The study of $\Theta(\mathcal H)$ is a fascinating area in its own right. In this section, we present calculations for some hypergraphs besides cubes. Since cubes are an example of a regular polytope, it might be natural to wonder what happens for other regular polytopes. Besides cubes, there are two other infinite classes of polytopes: simplices (generalized tetrahedra) and cross polytopes (generalized octahedra).

\begin{definition}
\item
\begin{enumerate}
\item Let $\operatorname{Simp}(n,k)$ denote the hypergraph whose vertices are the vertices of the $n$-simplex and whose hyperedges arise from the $k$-faces of the simplex.
\item Define the $n$-dimensional cross-polytope to be the simplicial complex which is the iterated suspension $\Sigma^{n-1} S^0$. Define the hypergraph $\operatorname{CrossPoly}(n,k)$ to have the same vertex set as $\Sigma^{n-1} S^0$ and to have a hyperedge for every $k$ dimensional face.
\end{enumerate}
\end{definition}

\begin{theorem}
The following homotopy equivalences hold.
\begin{enumerate} 
\item $\displaystyle\Theta(\operatorname{Simp}(n,k))\simeq \vee_{\binom{n}{n-k}} S^{n-k-1}$ 
\item $\Theta(\operatorname{CrossPoly}(n,k))\simeq \vee_{\binom{n-1}{k}}S^{2n-k-2}$
\end{enumerate}
\end{theorem}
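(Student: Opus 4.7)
Statement~(1) is essentially tautological together with a classical fact about simplex skeleta. A simplex of $\Theta(\operatorname{Simp}(n,k))$ is a subset of the $n+1$ vertices of the $n$-simplex that avoids some $(k+1)$-element hyperedge, equivalently a subset of size at most $n-k$, so $\Theta(\operatorname{Simp}(n,k))$ is the $(n-k-1)$-skeleton of $\Delta^n$. It is well known that the $d$-skeleton of $\Delta^N$ is homotopy equivalent to $\vee_{\binom{N}{d+1}}S^d$: its reduced cellular chain complex is the truncation of the acyclic chain complex of $\Delta^N$, so a telescoping argument concentrates the homology in top degree with the stated rank, and for $d\geq 2$ the skeleton is simply connected, so Hurewicz and Whitehead promote this to a wedge of spheres; the cases $d=0,1$ are handled directly. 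Applying this with $N=n$, $d=n-k-1$ finishes~(1).

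For~(2), set $m=n-k-1$ and $V=\{v_1,\bar v_1,\ldots,v_n,\bar v_n\}$. A subset $\sigma\subseteq V$ lies in $\Theta(\operatorname{CrossPoly}(n,k))$ precisely when the number of antipodal pairs $\{v_i,\bar v_i\}$ fully contained in $\sigma$ is at most $m$; call this complex $C(n,m)$. Since $\binom{n-1}{k}=\binom{n-1}{m}$ and $2n-k-2=n+m-1$, the theorem is equivalent to $C(n,m)\simeq \vee_{\binom{n-1}{m}}S^{n+m-1}$ (with the convention that the empty wedge is a point when $\binom{n-1}{m}=0$), and I would prove this by induction on $n$. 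The base case $n=1$ reads $C(1,0)=\partial\Delta^1=S^0$ and $C(1,1)=\Delta^1$, which is contractible, as predicted by the formula.

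For the inductive step, isolate the last pair by setting $X=\{\sigma\in C(n,m):\sigma\cup\{v_n,\bar v_n\}\in C(n,m)\}$ and $Y=\{\sigma\in C(n,m):\{v_n,\bar v_n\}\not\subseteq\sigma\}$. A direct combinatorial check against the ``at most $m$ complete pairs'' condition identifies these subcomplexes as the joins $X=C(n-1,m-1)*[v_n,\bar v_n]$, $Y=C(n-1,m)*\partial[v_n,\bar v_n]=\Sigma C(n-1,m)$, and $X\cap Y=C(n-1,m-1)*\partial[v_n,\bar v_n]=\Sigma C(n-1,m-1)$, with $X\cup Y=C(n,m)$. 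Since $X$ is a join with a contractible $1$-simplex it is contractible, so the pushout gives $C(n,m)\simeq Y/(X\cap Y)\simeq \Sigma(C(n-1,m)/C(n-1,m-1))$. Feeding this into the long exact sequence of the pair $(C(n-1,m),C(n-1,m-1))$ and using the inductive hypothesis, the only non-zero slice is the split short exact sequence
$$0\to\mathbb Z^{\binom{n-2}{m}}\to \widetilde H_{n+m-2}(C(n-1,m)/C(n-1,m-1))\to\mathbb Z^{\binom{n-2}{m-1}}\to 0,$$
whose middle term has rank $\binom{n-2}{m}+\binom{n-2}{m-1}=\binom{n-1}{m}$ by Pascal's identity; all other reduced homology vanishes. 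Suspending, $C(n,m)$ has reduced homology concentrated in degree $n+m-1$ with rank $\binom{n-1}{m}$. For $m\geq 1$ the quotient $C(n-1,m)/C(n-1,m-1)$ is path connected (its only $0$-cell is the basepoint), so $C(n,m)=\Sigma(\cdots)$ is simply connected, and Hurewicz together with Whitehead promote the homology calculation to $C(n,m)\simeq \vee_{\binom{n-1}{m}}S^{n+m-1}$. The edge cases ($m=0$, for which $C(n,0)$ is the cross-polytope $S^{n-1}$, and very small $(n,m)$) are verified directly.

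The main obstacle I anticipate is the combinatorial verification that $X$, $Y$, and $X\cap Y$ are the joins described, since this is the step that genuinely uses the structure of the cross-polytope; once the pushout is set up, the rest is a routine long-exact-sequence calculation organized by Pascal's identity.
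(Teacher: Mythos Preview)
Your argument is correct. Part~(1) is handled essentially as the paper does: both identify $\Theta(\operatorname{Simp}(n,k))$ with the $(n-k-1)$-skeleton of $\Delta^n$ and then invoke a standard fact about skeleta (the paper collapses the star of a vertex; you compute homology and use Hurewicz/Whitehead---either way is routine).

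Part~(2), however, takes a genuinely different route. The paper proves it by discrete Morse theory: it builds the sequential gradient field $\mathbf D_{v_1^+,\ldots,v_n^+}$ on $\Theta(\operatorname{CrossPoly}(n,k))$ and shows that the only critical simplices are obtained by choosing $k$ indices from $\{2,\ldots,n\}$ and deleting the corresponding $v_i^+$'s together with $v_1^+$, leaving a single critical $0$-cell and $\binom{n-1}{k}$ critical $(2n-k-2)$-cells. This is a one-shot combinatorial argument with no induction; the binomial coefficient appears directly as a count of index sets. Your approach instead isolates the last antipodal pair, obtains the cofibration $\Sigma C(n-1,m-1)\hookrightarrow \Sigma C(n-1,m)$ with contractible complement, and reads off the homology of $C(n,m)\simeq \Sigma\bigl(C(n-1,m)/C(n-1,m-1)\bigr)$ from the long exact sequence; the binomial coefficient then emerges inductively via Pascal's identity. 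The paper's method is more hands-on and gives an explicit cell structure, while yours is a cleaner topological recursion that makes the inductive structure of the answer transparent and avoids the machinery of discrete vector fields entirely. The combinatorial identification of $X$, $Y$, $X\cap Y$ that you flag as the main obstacle is in fact immediate from your characterization of $C(n,m)$ as ``at most $m$ complete pairs,'' so there is no real difficulty there.
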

\begin{proof}
Notice that $\Theta(\operatorname{Simp}(n,k))$ is the $(n-k-1)$-skeleton of the $n$-simplex. The homotopy type of this is easily calculated by shrinking the star of a vertex to a point, leaving a wedge of $(n-k-1)$-spheres, one for every $n-k-1$ face missing that vertex. 

$\operatorname{CrossPoly}(n,k)$ can be modeled as follows. Let the vertices be $v_1^+,v_1^-,\ldots,v_n^+, v_n^-$. A collection of $k+1$ vertices forms a $k$-face if and only if it does not contain both vertices in any pair $v_i^+,v_i^-$.

Now form the sequential vector field $\mathbf D_{v_1^+,\ldots,v_n^+}$.
 The critical simplices in $C_1$ are those which avoid only $k$-faces containing $v_1^+$. In particular, they must contain $v_1^-$ since otherwise any $k$-face that is avoided by $v_1^+$ could be converted to a $k$ face avoided by $v_1^-$ by replacing $v_1^+$ with $v_1^-$.  In each critical simplex, there must be a set of indices $I$ of size $k$ such that $1\not\in I$ and for every $i\in I$ at least one of $v_i^\pm$ is not in the simplex, and for every $j\not\in I\cup\{1\}$, both of $v_j^\pm$ are in the simplex.
  Now we calculate $C_2$. Evidently, all simplices in $C_1$  which contain both $v_2^\pm$ persist to $C_2$. The other elements of $C_1$ which remain unpaired and therefore persist to $C_2$
 are simplices that contain  $v_2^-$ but not $v_2^+$. Continuing, at the $\ell$th stage of the vector field's construction, if a simplex contains both $v_\ell^\pm$ then it is not paired, or if it contains $v_\ell^-$ but not $v_\ell^+$ it is not paired. In the end, the critical simplices are given by choosing $k$ indices from $2,\ldots, n$, filling in all vertices except $v_1^+$ and $v_i^+$ where $i$ is in the chosen set of $k$ indices. There are $2n-k-1$ vertices in such a configuration, corresponding to a $2n-k-2$-cell, and there are $\binom{n-1}{k}$ ways to choose the index set, giving the desired result. 
\end{proof}

In addition to the above infinite classes of regular polyhedra, in three dimensions we also have
the icosahedron and dodecahedron.
 Let $\operatorname{Dodec}(k)$ represent the hypergraph of $k$-dimensional faces of a dodecahedron, and $\operatorname{Icos}(k)$ represent the hypergraph of $k$-faces of an icosahedron.
\begin{proposition}\label{blah}
 The following homotopy equivalences hold
\begin{enumerate}
\item $\Theta(\operatorname{Dodec}(1))\simeq \vee_4 S^{12}$
\item $\Theta(\operatorname{Icos}(1))\simeq S^7\vee\vee_6 S^{8}$
\item $\Theta(\operatorname{Dodec}(2))\simeq \Sigma^3\mathbb{RP}^2$
\item $\Theta(\operatorname{Icos}(2))\simeq \Sigma^3\mathbb{RP}^2$
\end{enumerate}
\end{proposition}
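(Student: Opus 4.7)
The plan is to apply the discrete vector field machinery of Section~\ref{vecsection}, together with the duality $\Theta(\mathcal H^*)\simeq \Theta(\mathcal H)$ and the central symmetry of the two polytopes.

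A first reduction handles (4). Because the dodecahedron and icosahedron are polar duals, the $12$ pentagonal faces of the dodecahedron are in bijection with the $12$ vertices of the icosahedron, and under this bijection the $20$ vertices of the dodecahedron (each lying in exactly $3$ pentagons) correspond to the $20$ triangular faces of the icosahedron (each containing exactly $3$ vertices). Hence $\operatorname{Dodec}(2)^*\cong \operatorname{Icos}(2)$, and the duality theorem proved above gives $\Theta(\operatorname{Dodec}(2))\simeq\Theta(\operatorname{Icos}(2))$; it therefore suffices to establish (3). I note that by contrast $\operatorname{Dodec}(1)^*$ is a hypergraph on $30$ vertices with $20$ triple hyperedges, which is not $\operatorname{Icos}(1)$, so no analogous shortcut is available for (1) and (2), and those two parts really are separate computations.

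For (1) and (2) the target is a wedge of near-top-dimensional spheres, matching the pattern of Example~\ref{cubeexample}. I would label the vertices $v_1,\ldots,v_n$ in an order that first exhausts the neighborhood of $v_1$, then second neighbors, and so on, and form the sequential vector field $\mathbf D_{v_1,\ldots,v_n}$. After each pairing stage, vertices whose omission would create an already-handled avoided edge become forced in the surviving simplices, so the remaining critical simplices are progressively constrained to a short list of local configurations around the chosen vertex. This bookkeeping should terminate with $\{v_1\}$ together with exactly $4$ critical $12$-simplices in case (1), and with $\{v_1\}$, one critical $7$-simplex, and $6$ critical $8$-simplices in case (2), at which point Theorem~\ref{stuff} delivers the stated wedges. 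Choosing the vertex order to respect an axis of rotational symmetry ($\mathbb{Z}_3$ for the dodecahedron, $\mathbb{Z}_5$ for the icosahedron) should reduce the enumeration to a manageable number of orbits.

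Part (3) is subtler: since $\Sigma^3\mathbb{RP}^2$ has a $\mathbb{Z}/2$ in $H_4$ it is not a wedge of spheres, and counts of critical simplices cannot pin down its homotopy type. My strategy would be to construct a sequential vector field on $\Theta(\operatorname{Icos}(2))$ with exactly three critical cells --- a $0$-simplex, a $4$-simplex, and a $5$-simplex --- and then to show that the attaching map of the $5$-cell onto the $4$-cell produced by the gradient flow has degree $\pm 2$, which would identify the resulting CW complex with $\Sigma^3\mathbb{RP}^2$. Computing that degree is where I expect the antipodal involution of the icosahedron to play an essential role: the involution pairs gradient paths between the two top critical cells, and if the only unpaired contributions come from fixed-at-the-chain-level configurations one can read off degree $\pm 2$. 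I expect this degree computation to be the main obstacle, both here and presumably in the author's proof: arriving at the correct integral homology is considerably easier than distinguishing $\Sigma^3\mathbb{RP}^2$ from other $5$-dimensional CW complexes with the same homology, and this final step is where the argument must use the $\mathbb{Z}_2$-symmetry, rather than mere counting, in a genuinely essential way.
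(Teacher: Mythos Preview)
Your reduction of (4) to (3) via $\operatorname{Dodec}(2)^*\cong\operatorname{Icos}(2)$ is exactly what the paper does. Beyond that, the paper does not give a detailed argument: it states that ``these complexes were calculated using a mixture of computer and hand calculations,'' with a program performing simple-homotopy reductions and the attaching maps of the few remaining cells then examined by hand. So there is no step-by-step proof to compare against, and your outline is already more explicit than what the paper records.

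That said, there is a genuine gap in your treatment of (2). You write that once the sequential vector field leaves $\{v_1\}$, one $7$-simplex, and six $8$-simplices, ``Theorem~\ref{stuff} delivers the stated wedges.'' It does not. Theorem~\ref{stuff} only gives a CW complex with those cells; the six $8$-cells attach along maps $S^7\to S^7$, and $\pi_7(S^7)\cong\mathbb Z$, so a priori the result could be any of the complexes $S^7\cup_{d_1}e^8\cup\cdots\cup_{d_6}e^8$. You must still argue that all six degrees vanish, either by computing the Morse boundary map (tracing gradient paths from the $8$-cells to the $7$-cell and checking signed counts cancel), or by an independent homology computation showing $H_7\cong\mathbb Z$ and $H_8\cong\mathbb Z^6$, which forces the differential to be zero and hence the attaching maps to be null-homotopic. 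Part (1) escapes this issue only because there are no cells strictly between dimensions $0$ and $12$, so the $12$-cells attach to a point and the wedge is automatic.

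For (3) your plan is sound in outline and matches the paper's remark about ``examining the way they attach to each other,'' but the proposed mechanism---pairing gradient paths via the antipodal $\mathbb Z_2$ to read off degree $\pm 2$---is speculative as stated. The antipodal map need not fix the chosen sequential field, so it will not in general permute gradient paths between your chosen critical cells; you would first have to build a $\mathbb Z_2$-equivariant discrete vector field, and even then the parity argument you sketch does not obviously distinguish degree $0$ from degree $\pm 2$. In practice (and apparently in the paper) this step is done by directly tracing the finitely many gradient paths from the critical $5$-cell to the critical $4$-cell and summing their signs.
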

These complexes were calculated using a mixture of computer and hand calculations. The computer program performed as many simple-homotopy reductions as it could find, leaving a small collection of simplices in each case. The final results were achieved by examining the way they attach to each other. Note that since $\operatorname{Dodec}(2)^*=\operatorname{Icos}(2)$, the equality of the last two is no accident.

Finally, the three additional four dimensional regular polytopes were too complex to analyze by computer.

\section{Group Actions}\label{groupsection}

Let $\mathcal H=(V,H)$ be a finite hypergraph, with a group action $G$. That is $G$ acts on the vertices and carries hyperedges to hyperedges. We define the quotient hypergraph,  $\mathcal H/G$, to have vertex set equal to $V/G$ and the hyperedges to be the images of the hyperedges under the quotient $V\to V/G$. 

\begin{theorem}\label{grouptheorem}
Consider a finite hypergraph $\mathcal H$, acted on by a $p$-group $G$. Then
$$\chi(\Theta(\mathcal H))\equiv \chi(\Theta(\mathcal H/G)) \mod p.$$
\end{theorem}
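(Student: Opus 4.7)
The plan is to exploit the $G$-action on the set of simplices of $\Theta(\mathcal H)$ together with the standard orbit-counting principle for $p$-groups: since $|G|$ is a power of $p$, every orbit has size a power of $p$, and only singleton orbits survive modulo $p$.

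First I would establish a natural bijection between the $G$-invariant simplices of $\Theta(\mathcal H)$ (subsets $\sigma\subset V$ which are unions of $G$-orbits and satisfy $\sigma\cap h=\emptyset$ for some hyperedge $h$) and the simplices $\bar\sigma$ of $\Theta(\mathcal H/G)$, given by $\sigma\mapsto \pi(\sigma)$ with inverse $\bar\sigma\mapsto \pi^{-1}(\bar\sigma)$, where $\pi:V\to V/G$ is the quotient. The key point is that if $\sigma$ is $G$-invariant and misses $h$, then $G$-invariance of $\sigma$ forces $\sigma\cap gh=\emptyset$ for every $g\in G$, which is exactly the statement that $\pi(\sigma)\cap\pi(h)=\emptyset$ in $V/G$; the converse runs essentially the same calculation backwards.

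Next I would group $\chi(\Theta(\mathcal H))=\sum_\sigma(-1)^{\dim\sigma}$ by $G$-orbits. Orbits of size greater than $1$ have size a positive power of $p$, so they contribute zero modulo $p$, leaving
$$\chi(\Theta(\mathcal H))\equiv \sum_{\sigma \text{ } G\text{-invariant}} (-1)^{\dim\sigma}\pmod{p}.$$

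The main obstacle is that the bijection of the first step does \emph{not} preserve dimension: if $\bar\sigma$ has vertices $\bar v_1,\ldots,\bar v_k$ whose preimage $G$-orbits have sizes $n_1,\ldots,n_k$, then $|\sigma|=n_1+\cdots+n_k$ while $|\bar\sigma|=k$. I would resolve this with a case split on $p$. For $p$ odd, each $n_i$ is odd (being a power of an odd prime), so $|\sigma|$ and $k$ have the same parity, hence $(-1)^{\dim\sigma}=(-1)^{\dim\bar\sigma}$ and the displayed sum equals $\chi(\Theta(\mathcal H/G))$ exactly. For $p=2$, all signs reduce to $1\pmod{2}$, so $\chi(\Theta(\mathcal H))$ is congruent mod $2$ to the total number of $G$-invariant simplices; the bijection identifies this with the number of simplices of $\Theta(\mathcal H/G)$, which is in turn congruent mod $2$ to $\chi(\Theta(\mathcal H/G))$. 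In either case we obtain the claimed congruence. A uniform alternative would be to barycentrically subdivide and invoke Smith theory via the isomorphism $(\operatorname{sd}\Theta(\mathcal H))^G\cong \operatorname{sd}\Theta(\mathcal H/G)$, handling all primes at once at the cost of heavier topological machinery.
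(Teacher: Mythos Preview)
Your argument is correct and is essentially the same as the paper's: orbit-counting to reduce $\chi$ modulo $p$ to the contribution of $G$-invariant simplices, the same bijection between $G$-invariant simplices and simplices of $\Theta(\mathcal H/G)$ via $\pi$ and $\pi^{-1}$, and the same case split on $p$ (parity of orbit sizes for odd $p$, triviality of signs mod $2$ for $p=2$). The Smith-theory remark at the end is an extra alternative that the paper does not pursue.
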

\begin{proof}
The group $G$ acts on the set of simplices of $\Theta(\mathcal H)$. By the index counting formula, the total number of simplices is equal to the sum of the indices of the stabilizers of orbit representatives. If a simplex is not stabilized by all of $G$, then the index is a multiple of $p$, so that such simplices can be discarded when counting modulo $p$. We then are left with counting simplices (subsets of vertices of $\mathcal H$) which are stabilized by the whole group $G$. These are in 1-1 correspondence with simplices in the quotient $\Theta((\mathcal H/G))$. If a $G$-stabilized simplex $\alpha$ in $\mathcal H$
omits some hyperedge $h$, then the quotient simplex $\bar{\alpha}$ omits $\bar{h}$, since if $g\cdot h\cap \alpha\neq \emptyset$ for some group element $g$, then $h\cap g^{-1}\cdot \alpha\neq\emptyset$, a contradiction since $g^{-1}\cdot\alpha =\alpha$. Similarly, if a set of vertices in the quotient avoids a quotient hyperedge $h$, then the union of $G$-orbits of these vertices will avoid any lift of $h$. If $p=2$ we are done
since the Euler characteristic modulo $2$ does not see the dimension of the simplices that it counts. If $p\neq 2$, we must show that the mod-2 dimension of a $G$-stabilized simplex is the same as the corresponding simplex on the quotient. This follows because the size of an orbit of any vertex under $G$ will be a power of $p$, which is odd.\end{proof}

\begin{corollary}
Suppose $\mathcal H$ has at least one hyperedge and a $p$-group acts transitively on the vertices. Then $\chi(\Theta(\mathcal H))\equiv 0\mod p$.
\end{corollary}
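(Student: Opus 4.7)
The plan is to reduce directly to Theorem~\ref{grouptheorem} by computing $\chi(\Theta(\mathcal H/G))$ explicitly and showing it is zero. The main observation is that transitivity of the action collapses the quotient hypergraph to something essentially trivial, so there is very little to do besides apply the preceding theorem.

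First I would note that because $G$ acts transitively on $V$, the quotient $V/G$ consists of a single element $\bar{v}$. Consequently, the only nonempty subset of $V/G$ is $\{\bar{v}\}$ itself. Since $\mathcal H$ has at least one hyperedge $h$, its image under the quotient map $V \to V/G$ is a nonempty subset of $V/G$, which must equal $\{\bar{v}\}$. Thus $\mathcal H/G$ is the one-vertex hypergraph whose unique hyperedge is the full vertex set $\{\bar{v}\}$.

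Next I would read off $\Theta(\mathcal H/G)$ from the definition. The simplices of $\Theta(\mathcal H/G)$ are the nonempty subsets of complements of hyperedges, but the complement of $\{\bar{v}\}$ in $V/G$ is empty. So $\Theta(\mathcal H/G)$ contains no simplices at all, and its Euler characteristic is $0$. Applying Theorem~\ref{grouptheorem} then gives
$$\chi(\Theta(\mathcal H)) \equiv \chi(\Theta(\mathcal H/G)) = 0 \pmod p,$$
which is the claim. There is no serious obstacle here; the only subtlety is to ensure the convention for $\chi$ is the same as in Theorem~\ref{grouptheorem}, where simplices are counted with signs and the empty simplex is not included, so that the empty complex $\Theta(\mathcal H/G)$ contributes $0$ rather than $-1$.
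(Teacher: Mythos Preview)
Your proposal is correct and matches the paper's proof essentially verbatim: the paper likewise observes that the quotient hypergraph is a single vertex with a single hyperedge, concludes $\Theta(\mathcal H/G)=\emptyset$ has Euler characteristic $0$, and invokes Theorem~\ref{grouptheorem}. Your added remarks about conventions are harmless elaborations on the same argument.
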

\begin{proof}
The quotient hypergraph is a single vertex and a single hyperedge. Thus $\Theta((\mathcal H/G))=\emptyset$, which has Euler characteristic $0$.
\end{proof}

This implies

\begin{corollary}
For every $k\leq n$, $\chi(\Theta(\operatorname{Cube}(n,k)))$ is even.
\end{corollary}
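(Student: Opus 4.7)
The plan is to apply the previous corollary with $p=2$, so the task reduces to exhibiting a $2$-group acting transitively on the vertices of the $n$-cube while preserving the hyperedge structure of $\operatorname{Cube}(n,k)$.

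First I would identify the vertex set of the $n$-cube with $(\mathbb{Z}/2)^n$ and let $G=(\mathbb{Z}/2)^n$ act on itself by translation (coordinatewise XOR). This action is clearly transitive: given two vertices $u,v$, the element $u+v$ sends $u$ to $v$. Its order is $2^n$, so $G$ is a $2$-group.

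Next I would verify that this action sends $k$-faces of the cube to $k$-faces. A $k$-dimensional face of the $n$-cube is specified by fixing the values of some $n-k$ coordinates and letting the remaining $k$ coordinates vary freely over $\{0,1\}$. Translation by a fixed vector $t\in(\mathbb{Z}/2)^n$ shifts the fixed coordinate values by the corresponding entries of $t$ but preserves which coordinates are fixed versus free, so $k$-faces go to $k$-faces. Thus $G$ acts on the hypergraph $\operatorname{Cube}(n,k)$ in the required sense.

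For $k\leq n$ the hypergraph has at least one hyperedge (there is at least one $k$-face of the $n$-cube). So the hypotheses of the previous corollary are satisfied with $p=2$, yielding $\chi(\Theta(\operatorname{Cube}(n,k)))\equiv 0 \pmod 2$, which is exactly the statement that the Euler characteristic is even.

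The main obstacle here is not computational but structural: one must check that the translation action genuinely preserves the collection of $k$-faces (not just permutes vertices arbitrarily) so that the hypothesis on $G$-action from Theorem \ref{grouptheorem} applies. Once this is observed, the corollary is immediate from the preceding corollary.
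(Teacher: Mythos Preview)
Your proof is correct and follows essentially the same approach as the paper: both use the translation action of $\mathbb{Z}_2^n$ on the cube vertices to invoke the preceding corollary with $p=2$. Your version simply supplies more detail (identifying vertices with $(\mathbb{Z}/2)^n$, checking explicitly that translations send $k$-faces to $k$-faces, and noting that a hyperedge exists) where the paper's one-line proof leaves these verifications implicit.
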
 
\begin{proof}
The group $\mathbb Z_2^n$ acts on $\operatorname{Cube}(n,k))$, and is transitive on the vertices.
\end{proof}

Let's check another example.
\begin{example}
Let $\mathbb Z_5$ act on $\operatorname{Dodec(n)}$ by rotation through an axis piercing the center of a pentagonal face. Then $\operatorname{Dodec(1)}/\mathbb Z_5$ consists of four vertices $v_1,v_2,v_3,v_4$ with edges connecting $v_i$ to $v_{i+1}$ and with the singleton hyperedges $\{v_1\}$ and $\{v_4\}$. To calculate $\Theta((\operatorname{Dodec}(1)/\mathbb Z_5))$ we can throw away any hyperedges that contain existing hyperedges. Hence we only really have three hyperedges. Using the sequential vector field arising from the sequence $v_1,v_2$ we have only two critical simplices: $\{v_1\}$ and $\{v_2,v_3\}$, so we get a circle.   Thus $\chi(\Theta(\operatorname{Dodec}(1)))\equiv 0 \mod 5$. This meshes with the answer of $\chi=5$ coming from Proposition \ref{blah}. Similarly, the quotient of $\operatorname{Dodec}(2)$ is a hypergraph with the same $4$ vertices and with hyperedges 
$\{v_1\},\{v_4\},\{v_1,v_2,v_3\}$ and $\{v_2,v_3,v_4\}$. These latter two can be discarded. Since this hypergraph contains an isolated vertex it is contractible. Hence $\chi(\Theta(\operatorname{Dodec}(1)))\equiv 1 \mod 5$, which is also consistent with Proposition \ref{blah}.

\end{example}

Finally, we use $p$-groups to analyze cubes and give support to Conjecture \ref{factorialconjecture}.

\begin{theorem}\label{groupcube}
Let $p$ be an odd prime and $n\geq p$, then 
$$\chi(\Theta(\operatorname{Cube}(n,n-2)))\equiv 1 \mod p$$
\end{theorem}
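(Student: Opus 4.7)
The plan is to promote the mod-$p$ congruence supplied by Theorem \ref{grouptheorem} to the claimed congruence by establishing, as advertised in the introduction, the stronger fact that $\Theta(\operatorname{Cube}(n,n-2)/\mathbb Z_p)$ is contractible. Because $p\leq n$, I would let a generator of $\mathbb Z_p$ act on $\{0,1\}^n$ by cyclically permuting the first $p$ coordinates and fixing the remaining $n-p$; this is a symmetry of the cube and in particular carries $(n-2)$-faces to $(n-2)$-faces. Theorem \ref{grouptheorem} then yields
$$\chi(\Theta(\operatorname{Cube}(n,n-2)))\equiv\chi(\Theta(\operatorname{Cube}(n,n-2)/\mathbb Z_p))\pmod{p},$$
so it suffices to check that the quotient theta complex has Euler characteristic $1$, which will follow from contractibility.

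The main step is then to build a sequential discrete vector field on $\Theta(\operatorname{Cube}(n,n-2)/\mathbb Z_p)$ with a unique critical simplex. I would place the orbit $[\mathbf 0]=[(0,\dots,0)]$ first in the sequence: it is disjoint from every hyperedge image $\bar F_{i,j,a,b}$ with $(a,b)\neq (0,0)$, so pairing $\sigma\mapsto\sigma\cup\{[\mathbf 0]\}$ succeeds whenever $\sigma$ can be witnessed as a simplex by a face that does not pass through $\mathbf 0$. The simplices surviving this first stage are exactly those whose only witnessing faces are of type $(a,b)=(0,0)$, which forces their constituent orbits to share two coordinates equal to $0$. One then processes the remaining vertex orbits in a natural order, for example by increasing Hamming weight of a chosen representative, with ties broken using the necklace structure on the first $p$ coordinates, at each stage pairing off survivors via the new vertex just as in the analyses of $I_n$, $P_n$, and $\operatorname{Cube}(3,1)$ in Section \ref{calcsection}. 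The proposition on sequential fields automatically makes this a gradient field, and Theorem \ref{stuff} then gives the contractibility, so $\chi=1$. As a sanity check, the case $n=p=3$ already illustrates the plan: the quotient hypergraph has four vertex orbits and three hyperedges, the resulting theta complex is the tree $c\,\text{---}\,a\,\text{---}\,b\,\text{---}\,d$, and the sequence $([(000)],[(111)])$ produces exactly one critical $0$-simplex.

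The main obstacle is the combinatorial bookkeeping at each stage: one must show that the new vertex either pairs each surviving simplex with one obtained by adjoining it, or is already ``forced to be present,'' so that the simplex persists unchanged to the next stage. The difficulty, compared with the simpler analyses in Section \ref{calcsection}, is that a union of $\mathbb Z_p$-orbits may satisfy the ``missing pair in two coordinates'' condition via several inequivalent face orbits, producing many candidate witnessing hyperedges for a single simplex. The technical heart will be an inductive identification, at each prefix of the sequence, of a canonical witnessing hyperedge for every remaining critical simplex; the cascade of pairings then follows as in the graph examples. The hypothesis that $p$ is odd enters crucially because it makes every non-constant vertex orbit in the first $p$ coordinates of exact size $p$, eliminating the partial-orbit degeneracies that would otherwise spoil the accounting --- and it is also what allows the dimension-parity adjustment in Theorem \ref{grouptheorem} to pass through cleanly.
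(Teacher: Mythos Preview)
Your overall plan---apply Theorem~\ref{grouptheorem} to the $\mathbb Z_p$-action cycling $p$ coordinates, then show $\Theta(\operatorname{Cube}(n,n-2)/\mathbb Z_p)$ is contractible via a discrete vector field---is exactly the paper's strategy, and your $n=p=3$ sanity check is fine. But the proposal stops precisely where the real work begins. You write that ``the technical heart will be an inductive identification \ldots\ of a canonical witnessing hyperedge for every remaining critical simplex''; that sentence is a description of the difficulty, not a proof. Nothing you have said explains why a sequential field in Hamming-weight order collapses to a single critical cell, and there is no obvious inductive invariant along those lines.

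The paper does \emph{not} use a sequential field. Its key combinatorial lemma, which has no analogue in your sketch, is this: writing the quotient vertices as $p$-necklaces, every quotient hyperedge containing the end vertex $\bar 1$ also contains \emph{all} necklaces of degree $\le (p-1)/2$. This is proven by a short chord-counting argument (two white beads at a given distance; there are $p$ such chords and at most $p-1$ are blocked by black beads). The lemma lets one toggle $\bar 1$ freely on any simplex containing a ``nearby'' vertex, and by the $\mathbb Z_2$-symmetry the same holds for the other end $\overline{y_1\cdots y_p}$. After pairing with respect to both ends, only subsets of the two end vertices survive, and one more pairing finishes. For $n>p$ the argument is extended by transporting the end/nearby structure under the residual $\mathbb Z_2^{\,n-p}\oplus\mathbb Z_2$ action. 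Because this field is not sequential, the paper must check by hand that it is gradient---a step your sequential proposal would avoid \emph{if} it worked, but you have not shown that it does.
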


To see this, let $\mathbb Z_p$ act on $\operatorname{Cube}(n,n-2)$ by considering the cube's vertices to be the set of subsets of $\{x_1,\ldots,x_{n-p},y_1,\ldots,y_p\}$ and letting $\mathbb Z_p$ cycle the $y_i$'s. Theorem \ref{groupcube} now follows from the following stronger theorem.

\begin{theorem} Let $\displaystyle\mathcal H=\operatorname{Cube}(n,n-2)/\mathbb Z_p$. Then $\Theta(\mathcal H)$ is contractible.
\end{theorem}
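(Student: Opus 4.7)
My strategy is to construct an explicit discrete gradient vector field on $\Theta(\mathcal H)$ whose only critical simplex is a single $0$-cell; by Theorem \ref{stuff}, this forces $\Theta(\mathcal H) \simeq \{pt\}$.

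First I would recast the problem in usable form: a simplex of $\Theta(\mathcal H)$ corresponds to a $\mathbb Z_p$-invariant subset $T \subseteq \{0,1\}^n$ of cube vertices that is disjoint from at least one $(n-2)$-face $F$ of the cube, with simplicial dimension equal to the number of $\mathbb Z_p$-orbits in $T$ minus one. The hyperedges of $\mathcal H$ are $\mathbb Z_p$-orbits of $(n-2)$-faces, and they fall into three families according to which two coordinates are fixed: (i) faces fixing two $x$-variables, which are themselves $\mathbb Z_p$-invariant; (ii) faces fixing one $x$ and one $y$, whose $\mathbb Z_p$-orbit has size $p$; and (iii) faces fixing two $y$-variables, whose orbit structure depends on the distance between the chosen pair. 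This trichotomy will guide the construction.

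The plan is to apply the sequential vector field construction $\mathbf D_{v_1, v_2, \ldots}$ of Section \ref{vecsection} to a carefully chosen ordering of the vertices of $\mathcal H$. A natural first choice is $v_1 = [\emptyset]$, the $\mathbb Z_p$-fixed orbit of the empty subset. Step 1 pairs any simplex $\sigma$ with $\sigma \triangle \{v_1\}$ whenever both lie in $\Theta(\mathcal H)$; this happens exactly when $\sigma$ avoids some face $F$ that does not contain $\emptyset$, i.e.\ a face whose pair of fixed variables is not entirely of the form ``variable $=0$''. The residual simplices, whose every avoided face contains $\emptyset$, form a structured subclass. I would then continue the sequential vector field through additional vertices such as $[\{x_1\}]$, $[\{y_1\}]$, $[\{y_1,y_2\}]$, etc., chosen so that each successive toggle handles a predictable family of residuals, leaving $\{v_1\}$ as the sole critical simplex.

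The main obstacle will be verifying that this sequential vector field indeed has a unique critical simplex. This requires a careful inductive case analysis of how $\mathbb Z_p$-invariant subsets interact with avoided faces of each of the three types, showing that for every simplex $T$ unpaired by the current step there is a later vertex in our ordering whose toggle preserves the avoidance condition. The hypothesis $n \ge p$ is essential here: it guarantees the presence of at least one $y$-variable and hence a genuinely cyclic action, providing enough toggling flexibility that residual simplices from earlier steps can always be cleared. Because sequential vector fields are automatically gradient (by the proposition proved in Section \ref{vecsection}), no separate verification of the absence of gradient loops is needed, so the entire content of the argument lies in the combinatorial bookkeeping of which toggles remain available at each stage.
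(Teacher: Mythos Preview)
Your proposal is too schematic to be a proof, and it omits the one combinatorial idea that makes the argument go through. Saying ``choose an ordering starting with $[\emptyset]$ and keep toggling'' is not enough: after the first sequential step the residual simplices are exactly those whose every avoided hyperedge passes through $\bar 1$, and you give no mechanism for eliminating them. The paper's proof hinges on a specific structural fact you never mention: if one partitions the non-end vertices of $\mathcal H$ into those \emph{nearby} $\bar 1$ (necklaces of degree $\le (p-1)/2$) and those nearby $\overline{y_1\cdots y_p}$, then \emph{every} hyperedge through an end vertex contains all of its nearby vertices. This is proven by a short pigeonhole argument counting chords of a fixed length in a circle of $p$ beads, and it is where the primality of $p$ and the codimension-$2$ hypothesis are actually used. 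Without this lemma there is no reason the ``bookkeeping'' you describe terminates in a single critical cell.

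Moreover, the vector field the paper builds is \emph{not} sequential in the sense of Section~\ref{vecsection}: the vectors all toggle an end vertex, but the criterion for a simplex to participate is that it contain some nearby vertex, not merely that the toggle be legal. Consequently the paper must (and does) verify the gradient condition by hand. Your plan to rely on the automatic gradient property of sequential fields is therefore aiming at a different construction than the one that is known to work, and you have given no evidence that a purely sequential ordering exists with only one critical cell. Finally, your remark that $n\ge p$ ``guarantees the presence of at least one $y$-variable'' misidentifies the role of the hypothesis: the $\mathbb Z_p$-action already requires $p$ of the coordinates to be $y$'s, and the real content is the chord-counting step above together with the existence of a hyperedge (of type $y_i\lsem\cdots\rsem$) missing every end vertex, which is exactly what fails for $\operatorname{Cube}(n,n-1)$.
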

\begin{proof}
First, consider the case $n=p$. Given a monomial $m$, let $\lsem m\rsem$ denote the set of all submonomials, including $1$ and $m$. Then
$\operatorname{Cube}(p,p-2)$ has vertices in one-one correspondence with $\lsem y_1y_2\cdots y_p\rsem$ and has hyperedges of the form

\begin{enumerate}
\item \label{c} $\lsem y_1\ldots y_py_i^{-1}y_j^{-1}\rsem$
\item \label{d} $y_i\lsem y_1\ldots y_py_i^{-1}y_j^{-1}\rsem$
\item $y_iy_j \lsem y_1\ldots y_py_i^{-1}y_j^{-1}\rsem$
\end{enumerate}

Then the vertices of $\mathcal H$ are $p$-\emph{necklaces}, that is monomials in the variables $y_1,\ldots,y_p$ considered up to cyclic symmetry. (The reason for this terminology will become apparent in the next paragraph and is also illustrated in Figure \ref{hyperexample}.)
 A necklace which is an equivalence class of a monomial $m$, will be denoted by $\overline{m}$.
The degree of a necklace is defined to be the degree of the monomial. (So, for example the $5$-necklaces of degree $2$ are $\overline{y_1y_2}$ and $\overline{y_1y_3}$.) 
The hyperedges of $\mathcal H$ are induced by the hyperedges in the above list. 
  
Now we claim every hyperedge containing the necklace $\bar{1}$ also contains the necklaces of degree $\leq (p-1)/2$. The only hyperedges that contain $\bar{1}$  are of type (\ref{c}) in the above list. Thus, this amounts to showing that every $p$-necklace of degree $p-2$ contains every $p$-necklace of degree $\leq (p-1)/2$. Visualize a necklace as a circle of white and black beads, with black beads indicating the presence of a variable and white beads indicating its absence. The $\mathbb Z_p$ action is by rotation, so these pictures should be considered up to rotational symmetry.
In this language, a degree $p-2$ necklace will have exactly two white beads. Visualize these connected by a chord, say of length $a$.
Then we need to show that there is a chord of any such possible length $a$ between two white beads of a necklace with $(p-1)/2$ black beads. There are $p$ chords of length $a$, and each bead is in $2$ such chords. Thus the $(p-1)/2$ black beads can hit at most $2(p-1)/2$ of the chords of length $a$, leaving at least one chord between white beads.

\begin{figure}
\begin{center}
\includegraphics[width=.8\linewidth]{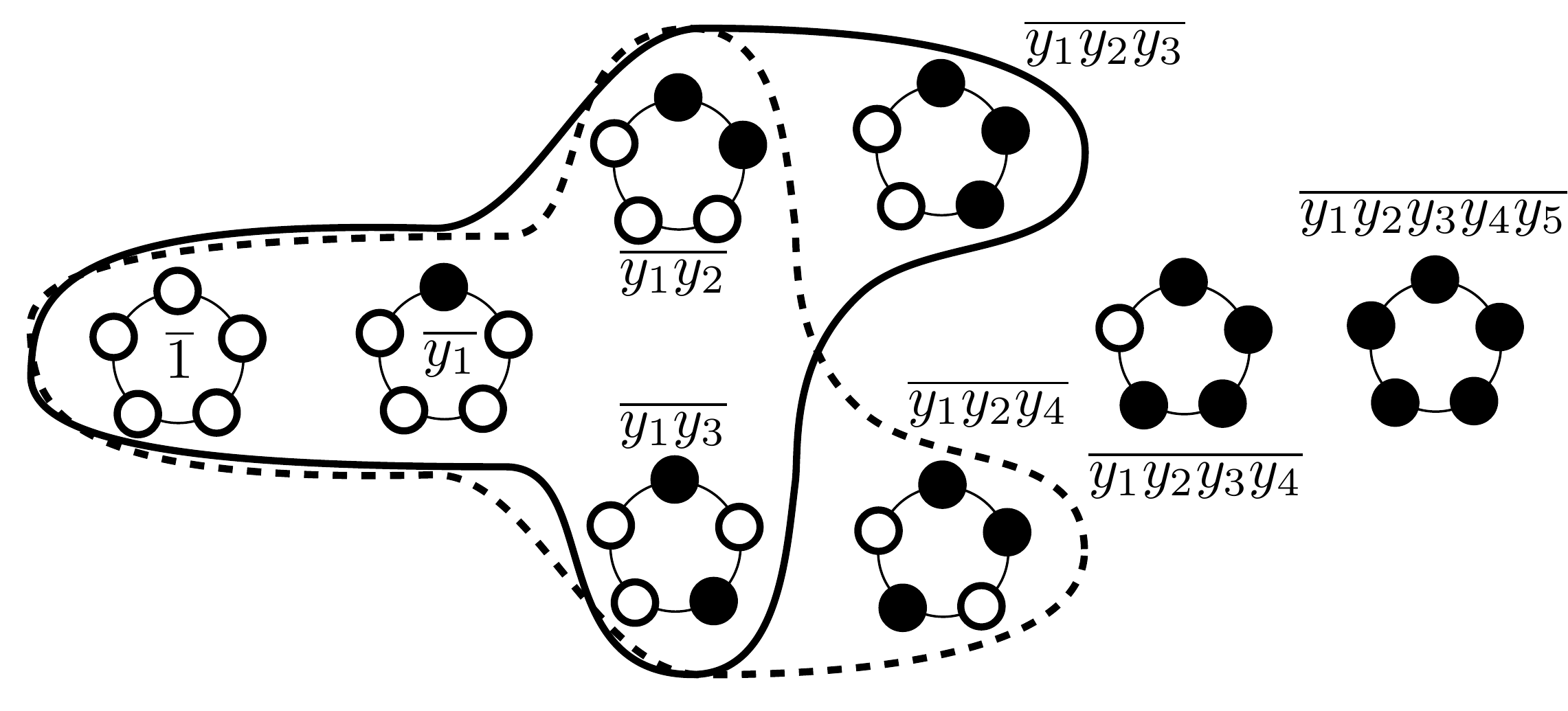}
\end{center}
\caption{A partial picture of $\operatorname{Cube}(5,3)/\mathbb Z_5$. The necklaces represent equivalence classes of monomials up to the cyclic $\mathbb Z_5$ action. The two hyperedges containing $\overline{1}$ are drawn: the quotients of $\lsem y_1y_2y_3\rsem$ and $\lsem y_1y_2y_4\rsem$, and they do indeed contain the three vertices nearby $\overline{1}$.}\label{hyperexample}
\end{figure}

We also claim that every hyperedge containing $\overline{y_1\cdots y_p}$ hits every necklace of degree greater than $(p-1)/2$. This follows because there is a $\mathbb Z_2$ action on $\mathcal H$ obtained by sending $y_i^{a_i}$ to $y_i^{1-a_i}$. One can check this
 by noting that the action exists on $\operatorname{Cube}(n,n-2)$ and is compatible with the $\mathbb Z_p$ action. Thus the two vertices $\bar{1}$ and $\overline{y_1\cdots y_p}$ can be interchanged, and the above argument applied. 

Let $\bar{1}$ and $\overline{y_1\ldots y_p}$ be called \emph{end} vertices. Every necklace except $\bar{1}$ of degree $\leq (p-1)/2$ will be said to be \emph{nearby} $\bar{1}$, and every necklace except $\overline{y_1\ldots y_p}$ of degree $> (p-1)/2$ will be said to be \emph{nearby} $\overline{y_1\ldots y_p}$.  In Figure~\ref{hyperexample}, the vertices on the left side of the diagram are all nearby $\overline{1}$, and the ones on the right are all nearby $\overline{y_1y_2y_3y_4y_5}$.

Now suppose a simplex of $\Theta(\mathcal H)$ contains some necklace nearby $\bar{1}$. Then, because every hyperedge meeting $\bar{1}$ also meets this necklace, $\bar{1}$ can be added  or removed and we would still have a legal simplex. (With the exception of the singleton simplex $[\bar 1]$.) Pair all simplices containing a nearby vertex to $\bar{1}$ into vectors of the form $(\sigma,\sigma\cup\{\bar{1}\})$.
The critical simplices are those which do not contain any vertex nearby $\bar{1}$. 
Repeat this procedure for the vertex $\overline{y_1\cdots y_p}$, yielding at most three critical simplices
$[\bar{1}],[\overline{y_1\cdots y_p}],$ and $[\bar{1},\overline{y_1\cdots y_p}] $.
These three simplices miss hyperedges of type (\ref{d}) above. Thus they are each legal, and we can, for instance pair the second two together, leaving a single critical $0$ simplex.  (This is where the argument fails for $\operatorname{Cube}(n,n-1)$.)

One must check this is a gradient vector field. Note that a gradient path consists of two alternating operations: removing a vertex from a simplex, and adding a vertex to a simplex, with the proviso that adding a vertex must correspond to a vector. So suppose we have a gradient path, and
a necklace other than $\bar{1}$ or $\overline{y_1\cdots y_p}$ is removed at some stage.
This can never be added back in, since such necklaces are not added in by any vector. Thus the gradient path cannot be a loop.
 So suppose the gradient path only has removal of the vertices $\bar{1}$ or $\overline{y_1\cdots y_p}.$ 
 Suppose it starts $\sigma_0,\tau_0,\sigma_1,\cdots$.Suppose that $\tau_0=\sigma_0\cup\{\bar{1}\}$. Then since we can't remove a vertex we just added, $\sigma_1$ is forced to be $\tau_1\setminus\{\overline{y_1\cdots y_p}\}$. But now $\sigma_1$ is not the first coordinate of any vector, so the path terminates and is not a loop.

Now we consider the general case of $\operatorname{Cube}(n,n-2)$. Note that $\mathbb Z_2^{\oplus n}$ acts on this cube, and that $G= \mathbb Z_2^{\oplus n-p}\oplus \mathbb Z_2$ acts on the quotient, with the first $n-p$ $\mathbb Z_2$'s flipping the parity of the $x_i$'s and the last one working on all of the $y_i$'s simultaneously. The set of vertices of $\mathcal H$ is thus equal to  $\mathbb Z_2^{\oplus n-p}\cdot \overline{\lsem y_1\ldots y_p\rsem}$, where $\overline{\lsem y_1\ldots y_p\rsem}$ represents the quotient of the hyperedge $\lsem y_1\ldots y_p\rsem$.

 This quotient $\overline{\lsem y_1\ldots y_p\rsem }$ has two distinguished vertices $\overline{1}$ and $\overline{y_1\ldots y_p}$, which we call end vertices, as before. Also as before, a vertex is said to be nearby the $\bar{1}$ vertex if it represents a necklace of degree $\leq (p-1)/2$. In general, a vertex of $\mathcal H$ is said to be an end vertex if it is in the $G$-orbit of an end vertex, and a vertex $v$  is said to be nearby an end vertex $w$ if
 $g\cdot v$ is nearby $g\cdot w=\bar{1}$, for some $g\in G$. We claim that every hyperedge containing an end vertex also contains each nearby vertex. It suffices to consider the end vertex $\bar{1}$. The hyperedges containing $\bar{1}$ are quotients of hyperedges of the form $\lsem x_1\cdots x_{n-p}y_1\ldots y_py_i^{-1}y_j^{-1}\rsem $, $\lsem x_1\cdots x_{n-p}y_1\ldots y_px_i^{-1}y_j^{-1}\rsem $ and $\lsem x_1\cdots x_{n-p}y_1\ldots y_px_i^{-1}x_j^{-1}\rsem $. If we look at the intersection of these edges with $\lsem y_1\cdots y_p\rsem $ we get $\lsem y_1\cdots y_py_i^{-1}y_j^{-1}\rsem $, $\lsem y_1\cdots y_p y_j^{-1}\rsem $ and $\lsem y_1\cdots y_p\rsem $. We have already seen when we argued the $n=p$ case that this first type must hit all vertices nearby $\bar{1}$, and the other two types are even larger. 

Now enumerate the end vertices in some fashion, say beginning with $\bar{1}.$ We create a vector field, by first pairing together all simplices which contain a vertex nearby $\bar{1}$ by vectors $(\sigma,\sigma\cup\{\bar{1}\})$.
The critical simplices are exactly those which do not contain any of the vertices nearby to $\bar{1}$. Now continue with the next end vertex, and proceed through all the end vertices. As in the $n=p$ case, which had two end vertices, we are left with simplices which are subsets of the end vertices. Note that the quotient of the hyperedge $y_i\lsem x_1\ldots x_{n-p}y_1\ldots y_py_i^{-1}y_j^{-1}\rsem$ does not contain any end vertices. Thus there is a critical simplex for every nonempty subset of the end vertices.
Form vectors of all legal pairs $(\sigma,\sigma\cup\{\bar 1\})$ among these, yielding a single critical $0$ simplex $[\bar 1]$, as in the $n=p$ case.

Now we argue that this is a gradient vector field. Consider a gradient path. As before if we ever remove a non-end vertex, we can never regain it. Hence we can only remove end vertices. Suppose that a simplex avoids all nearby vertices to ends $1$ to $k-1$, but that it contains a nearby vertex to the $k$th end. Call such a simplex $k$-deficient. By definition, every $k$-deficient simplex is part of a vector toggling the $k$th end vertex. Now suppose we have a gradient path, starting with a $k$ deficient 
simplex $\sigma_0$. Then $\tau_0$ is formed by adding the $k$th end vertex. $\sigma_1$ is formed by removing some other end vertex. But now $\sigma_1$ is still $k$-deficient, which means it is the right coordinate of a vector which deletes the $k$th end.  Thus the gradient path cannot continue, and is certainly not a loop.
\end{proof}

%=======================================

\section{Estimating the connectivity of theta complexes}
In this section we consider the case of $\operatorname{Cube}(n,1)$, which are actual graphs and not hypergraphs. This allows us to apply a connectivity estimate of Engstr\"om \cite{e1} to the Alexander dual independence complex.
\begin{theorem}
If $G$ is a graph with $m$ vertices and maximal valence $d$, then $I(G)$ is $\lfloor(m-1)/2d-1\rfloor$-connected.
\end{theorem}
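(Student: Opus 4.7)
The plan is to prove this by induction on the number of vertices $m$, using the standard link--deletion pushout for independence complexes together with the connectivity estimate for mapping cones.

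For any vertex $v$ of $G$, sorting the simplices of $I(G)$ by whether or not they contain $v$ gives the decomposition
\[
I(G) \;=\; I(G - v) \;\cup\; \bigl(v \ast I(G - N[v])\bigr),
\]
in which $N[v]$ denotes the closed neighborhood, the star $v \ast I(G - N[v])$ is contractible, and the two pieces intersect along $I(G - N[v])$. Viewing $I(G)$ as the mapping cone of the inclusion $I(G - N[v]) \hookrightarrow I(G - v)$ then yields the standard connectivity estimate: if $I(G - v)$ is $k$-connected and $I(G - N[v])$ is $(k - 1)$-connected, then $I(G)$ is $k$-connected.

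Set $k(m) := \lfloor (m - 1)/(2d)\rfloor - 1$ and assume the theorem for all graphs on strictly fewer than $m$ vertices of maximum valence at most $d$. Then $I(G - v)$ is $k(m - 1)$-connected and, since $G - N[v]$ has at least $m - d - 1$ vertices, $I(G - N[v])$ is $k(m - d - 1)$-connected. A short arithmetic check using $\lfloor x\rfloor + 1 = \lfloor x + 1\rfloor$ shows $k(m - d - 1) \geq k(m) - 1$ for every $d \geq 1$, so the link hypothesis is automatic, and $k(m - 1) \geq k(m)$ holds whenever $m \not\equiv 1 \pmod{2d}$, closing the induction in that case.

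The hard part is the residual congruence class $m \equiv 1 \pmod{2d}$, where the floor drops between $(m - 2)/(2d)$ and $(m - 1)/(2d)$ and the single-vertex pushout loses too much connectivity. I propose to handle this by choosing $v$ judiciously: if $G$ has any vertex of valence strictly less than $d$, take such a vertex, so that $|G - N[v]| \geq m - d$ and the pushout now delivers the required bound via a sharper inductive estimate on the link. In the remaining case of $d$-regular graphs with $m \equiv 1 \pmod{2d}$ I would first attempt a fold reduction --- using $I(G) \simeq I(G - u)$ whenever distinct vertices satisfy $N[u] \subseteq N[w]$ --- and, in the final holdout case where no such containment exists, fall back on a discrete Morse argument along the lines of Section~\ref{vecsection}, constructing an explicit acyclic matching on the face poset with no critical cells below dimension $k(m) + 1$.
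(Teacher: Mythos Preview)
First, note that the paper does not prove this theorem: it is quoted from Engstr\"om \cite{e1} and immediately applied to $\operatorname{Cube}(n,1)$. There is no in-paper proof to compare against, so I can only assess your argument on its own terms.

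Your link--deletion setup and the arithmetic up through the residual class $m \equiv 1 \pmod{2d}$ are fine. The genuine gap is your handling of that residual case. The mapping-cone bound reads
\[
\operatorname{conn}\bigl(I(G)\bigr)\;\ge\;\min\Bigl(\operatorname{conn}\bigl(I(G-v)\bigr),\ \operatorname{conn}\bigl(I(G-N[v])\bigr)+1\Bigr),
\]
and when $m\equiv 1\pmod{2d}$ the bottleneck is the \emph{deletion} term $I(G-v)$: it has exactly $m-1$ vertices no matter which $v$ you pick, and by induction is only $(k(m)-1)$-connected. Choosing a vertex of valence strictly less than $d$ sharpens the \emph{link} term $I(G-N[v])$ --- the wrong side of the minimum --- and therefore buys nothing. (Concretely, with $m=2dq+1$ one has $k(m-1)=q-2=k(m)-1$ while even $k(m-d)=q-2$ as well, so the improved link bound never exceeds $k(m)-1$ anyway.) Your remaining fallbacks --- a fold if one happens to exist, and otherwise ``a discrete Morse argument along the lines of Section~\ref{vecsection}'' --- are placeholders, not arguments; the $d$-regular, fold-free case with $m\equiv 1\pmod{2d}$ (e.g.\ $d=2$, $m=5$, the $5$-cycle) is precisely where the single-vertex pushout is too weak, and you have supplied no mechanism to recover the missing degree of connectivity there.

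As written, the proof is incomplete exactly at the step that carries the content. If you want to repair the induction you will need a decomposition in which the \emph{deletion} side also loses enough vertices to keep $k(\cdot)$ from dropping, or else abandon the single-vertex pushout entirely and consult Engstr\"om's original argument.
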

For $\operatorname{Cube}(n,1)$, we have $m=2^n$ and $d=n$, so according to Engstr\"om's theorem we know $I(G)$ is $(2^n-1)/(2n)-1$-connected. The actual connectivity for $n=2,3,4,5$ is $-1,0,2,4$ whereas this estimate yields $-1,0,0,2$, and so is not sharp in general.

\end{document}